\def\figurename{Figure}\makeatletter
\renewcommand{\fnum@figure}[1]{\figurename~\thefigure.}
\def\tablename{Table}
\renewcommand{\fnum@table}[1]{\tablename~\thetable.}
\newtheorem{theorem}{Theorem}[subsection]
\newtheorem{corollary}{Corollary}[subsection]
\newtheorem{proposition}{Proposition}[subsection]
\newtheorem{example}{Example}[subsection]
\theoremstyle{remark}
\newtheorem{remark}[theorem]{Remark}
\numberwithin{equation}{subsection}
\def\P{\mathbb P}
\def\R{\mathbb R}
\def\E{\mathbb E}
\def\Q{\mathbb Q}
\def\E{\mathbb E}
\def\N{\mathbb N}
\begin{document}

\title{Generalized delayed Black and Scholes Formula}

\author{Hubert Le Bi Golé \thanks{lebigole@gmail.com}\;\; and \; Auguste Aman \thanks{aman.auguste@ufhb.edu.ci/augusteaman5@yahoo.fr, corresponding author}\\
UFR Math\'{e}matiques et Informatique, Université Félix H. Boigny, Cocody,\;\;\;\;\;\;\;\;\;\;\\}

\date{}
\maketitle \thispagestyle{empty} \setcounter{page}{1}

\thispagestyle{fancy} \fancyhead{}
 \fancyfoot{}
\renewcommand{\headrulewidth}{0pt}

\begin{abstract}
The mean objective of this paper is to derive an explicit formula for a price of  an European option associated to the underlying delayed stock price which follows a linear differential equation with a general delay in the drift term. We use an equivalent martingale measure method based on Girsanov's property. Two of our model maintains the no-arbitrage property and the completeness of the market and can be considered as an extension some previous model introduced by Arriojas et al. in \cite{Aal}. The last one has a possible arbitrage property such that we can not obtain an unique price of  an European option associated.  
\end{abstract}

\vspace{.08in} \noindent \textbf{MSC}:Primary: 60F05, 60H15; Secondary: 60J30\\
\vspace{.08in} \noindent \textbf{Keywords}: Forward stochastic Volterra integral equations, Time delayed generators, Black and Scholes formula, Equivalent martingale measure, Option pricing.

\section{Introduction}
The continuous models have gained importance since the work of Bachelier \cite{Bachelier}. In this dynamic, the Black and Scholes model has given the interest to stochastic differential equations in financial mathematics modeling.
This modeling is one of the best known tools which allows, in theory, to evaluate an option with some given play-off, based on the underlying financial asset $S= (S_t, 0 \leq  t \leq  T)$ defined on some probability space $(\Omega,\mathcal{F},\P)$ and solution of the following Samuelson SDE: .
\begin{eqnarray}\label{bond}
 dS(t)= S(t)\left[\mu\ dt+\sigma\ dW(t)\right], \ \ S(0) = x,
\end{eqnarray}
where $\mu$ is the expected instantaneous rate of the risky asset, $\sigma$ is the volatility of the risky asset and $W=(W(t))_{t\geq 0}$ is a one-dimensional Brownian motion.

This evaluation is based on the notion of admissible portfolio strategy. More precisely, let $\Pi=(\varphi^0,\varphi)$ be a self-funded portfolio, constituted by risk-free assets and risky bond respectively. Setting by $V=(V(t))_{0\leq t\leq T}$ its value, we have
 \begin{eqnarray}\label{portfolio}
	dV(t)=\pi_B(t)dB(t)+\pi_S(t)dS(t),
\end{eqnarray}
where $B=(B(t), 0 \leq t \leq T)$ denotes risk-free asset and satisfies the following ordinary differential equation (ODE, in short). 
 
 \begin{eqnarray}\label{EDO}
dB(t)=r B(t) dt, \ \ B(0)= 1,
\end{eqnarray}
with $r$ the interest rate in the market. 

In this contexte the price of this option is $V(0)$ which is defined by
\begin{eqnarray*}
	V(0)=e^{-rT}\E^{*}(V(T)|S(0)=x),
\end{eqnarray*}
where $\E^{*}$ design an expectation which respect $\P^{*}$ named risk-neutral probability and defined by
\begin{eqnarray*}
	d\P^{*}=\xi(T)d\P,
\end{eqnarray*}
with 
\begin{eqnarray*}
	\xi(T)=\exp\left(-\frac{\mu-r}{\sigma}W(T)-\frac{1}{2}\left(\frac{\mu-r}{\sigma}\right)^2T\right).
\end{eqnarray*}

However, like any other model, the Black and Scholes model has some shortcomings. Indeed, in this model, the volatility of the risky bond is assumed to be constant, whereas an empirical study of market data showed it actually depends on time in an unpredictable way.

Next, in view of \eqref{bond}, the model assumes that the underlying asset follows geometric Brownian motion, meaning that the asset's price changes are normally distributed and independent of each other, this assumption may not hold in reality, as asset prices may exhibit jumps, volatility clusters, and other forms of non-normality such that such as skewness and kurtosis.

On the other hand, the emergence of a preference effect due to investors' risk aversion has led to a defect that undermines the aforementioned modeling in real data observations (stock prices, options, etc.), see Loewenstein \cite{Lo1,Lo2} for more details. In view of this, we assume that investors make a comparison of past and present opportunities by taking into account observed price trends or the satisfaction of past consumption.
Thus, Arriojas et al consider in \cite{Aal} the effect of the past in the determination of the fair price of a option. More specifically, they assume that the stock price satisfies stochastic functional differential equations (SFDEs) of the form: 
\begin{eqnarray*}
  	S(t)&=&\varphi(0)+\mu\int_{0}^{t}S(s-a)ds+\int_{0}^{t}g(S(s-b))S(s)dW(s),\ t\in[0,T]\nonumber \\
  S(t)&=&\varphi(t),\ t\in[-T,0]
  \end{eqnarray*}
  and 
  \begin{eqnarray*}
  	S(t)&=&\varphi(0)+\mu\int_{0}^{t}S(s-a)S(s)ds+\int_{0}^{t}g(S(s-b))S(s)dW(s),\ t\in[0,T]\nonumber \\
  S(t)&=&\varphi(t),\ t\in[-T,0],
  \end{eqnarray*}
  where $\mu, a$ and $b$ a positive constant and $g$ a continuous function.
Among other, they derive an explicit formula for the valuation of a European call option on a above given stock.

In this article, we establish the Black-Scholes formula with the stock whose price satisfies the following generalized stochastic functional differential equations (SFDEs) of the form: for $i=1,2$,
\begin{eqnarray}\label{AA}
	S(t)&=&\varphi(0)+\int_{0}^{t}f_i(s,S_s)ds+\int_{0}^{t}g(S(s-b))S(s)dW(s),\ t\in[0,T] \nonumber\\
  S(t)&=&\varphi(t),\ t\in[-T,0],
\end{eqnarray} 
with
\begin{eqnarray*}
f_1(t,S_t)=\int^{-a}_{-T}\mu(s+u)S(t+u)\alpha(du)\;\;\mbox{and}\;\; f_2(t,S_t)=\int^{-a}_{-T}\mu(s+u)S(t+u)S(t)\alpha(du),
\end{eqnarray*}
where, for a fix $a\in]0,T[$, $\alpha$ is a measure of probability on $([-T,-a],\mathcal{B}([-T,-a]))$, . 

The rest of this paper is organized as follows. In section 2, we study the existence and uniqueness result for SDE \eqref{AA}. The section 3 is devoted to derive an explicit formula for the valuation of a European call option on a given stock with respect SFDEs \eqref{AA}.

\section{General stochastic delay model for stock price}
In this paper, we  consider $W = \{W(t),\; t\geq 0 \}$ (rep. $\alpha$) a standard one dimensional Brownian motion (resp.  a probability measure) defined on $(\Omega, \mathcal{F},\P)$ (resp. on $([-T,-a],\mathcal{B}([-T,-a]))$). Let $(S(t))_{t\geq 0}$ denote a price of a bond in a financial market. We will assume that the process $(S(t))_{t\geq 0}$ satisfies the following general delayed stochastic differential equation (GDSDEs, in short): 
\begin{eqnarray}\label{GDSDE}
S(t)&=&\varphi(0)+\int_{0}^{t}f(s,S_s)ds+\int_{0}^{t}g(S(s-b))S(s)dW(s),\ t\in[0,T] \nonumber\\
  S(t)&=&\varphi(t),\ t\in[-T,0],
\end{eqnarray}
where, $b$ is positive constant and $S_t=(S(t+u))_{-T\leq u\leq 0}$ denotes all the past of $S$ until $t$.
We work also with  the following spaces and assumptions:
\begin{description}
\item $\bullet$ Let $\mathcal{C}([-T,0],\mathbb{R})$ is a Banach space of all continuous functions $\varphi : [-T,0] \rightarrow\mathbb{R}$ with the supremum norm 
\begin{eqnarray}\label{AA1}
\sup_{-T\leq s\leq 0}|\varphi(s)|^2.	
\end{eqnarray} 
\item $\bullet$ For a given process $\varphi\in \mathcal{C}([-T,0],\mathbb{R}) $, let $\mathcal{S}^{2}_{\varphi}(\R)$ denote the space of all predictable and almost surely continuous process $(\phi(t))_{-T\leq t\leq T}$ with values in $\R$ such that $\phi(t)=\varphi(t)$ a.s. for all $t\in [-T,0]$ and  $$\E\left[\sup_{0\leq s\leq T}|\phi(s)|^2\right]<+\infty. $$
\end{description}
\begin{description}
	\item $({\bf A1})$ $f:[0,T]\times \mathcal{C}([-T,0],\mathbb{R}^{*}_{+})\rightarrow \R$ such that
\begin{eqnarray*}
	|f(t,s^{1}_{t})-f(t,s_{t}^{2})|\leq K\|s_{t}^{1}-s_{t}^{2}\|.
\end{eqnarray*}
\item  $({\bf A2})$ $g:\mathbb{R}\longrightarrow \mathbb{R}$ is continuous.
\item $({\bf A3})$
 $\varphi:\Omega\longrightarrow\mathcal{C}([-T,0],\mathbb{R})$ is a ${\mathcal{F}}_{0}$-measurable and positive function.
\item $({\bf A4})$ for all $x\in\R^{*},\; g(x)\neq 0$.
\item $({\bf A5})$ the probability measure $\alpha$ is a uniform probability measure.
 \end{description}
 In view of Proposition 2.1.$(i)$, in \cite{CA} we have the following existence and uniqueness result.
\begin{theorem}\label{T01}
Assume assumptions $({\bf A1})$-$({\bf A3})$. Then SDE \eqref{GDSDE} has a unique solution. 
\end{theorem}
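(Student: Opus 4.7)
The plan is to employ the classical method of steps combined with a Picard/Banach fixed-point argument, exploiting the fact that the diffusion coefficient $g(S(s-b))$ depends only on the values of $S$ at time $s-b<s$. First I partition $[0,T]$ into sub-intervals $I_{k}=[kb\wedge T,(k+1)b\wedge T]$, $k=0,1,\dots,\lceil T/b\rceil-1$. On $I_{k}$, once $S$ has been constructed on $[-T,kb\wedge T]$, the shifted argument $s-b$ for $s\in I_{k}$ lies in $[-T,kb\wedge T]$, so by $({\bf A2})$ the process $g(S(\cdot-b))$ is continuous and progressively measurable on $I_{k}$, not depending on the unknown portion of the solution.

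On each interval $I_{k}$ I consider the operator $\Phi$ acting on the restriction of $\mathcal{S}^{2}_{\varphi}(\R)$ to $I_{k}$ defined by
\begin{eqnarray*}
(\Phi X)(t)=S(kb\wedge T)+\int_{kb\wedge T}^{t}f(s,X_{s})\,ds+\int_{kb\wedge T}^{t}g(S(s-b))\,X(s)\,dW(s),
\end{eqnarray*}
where $X_{s}$ uses the already-constructed past for $s+u<kb\wedge T$ and the current candidate $X$ otherwise. Using $({\bf A1})$, It\^o's isometry, and the Burkholder--Davis--Gundy inequality, I would estimate $\E[\sup_{u\le t}|(\Phi X^{1})(u)-(\Phi X^{2})(u)|^{2}]$; the functional-Lipschitz constant $K$ together with the (random but pathwise) local boundedness of $g(S(\cdot-b))$ on $I_{k}$ yields a strict contraction on $[kb\wedge T,(kb\wedge T)+\delta]$ for $\delta>0$ sufficiently small. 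A chaining argument then extends uniqueness and existence to all of $I_{k}$, and iterating over $k$ produces the global solution on $[-T,T]$.

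The main technical obstacle is the \emph{functional} nature of the drift $f(s,S_{s})$: the argument $S_{s}$ encodes the entire history of $S$ on $[-T,s]$, including values that have not yet been constructed on the current sub-interval. Assumption $({\bf A1})$ is tailored precisely for this: the supremum-norm Lipschitz bound allows a standard Gronwall argument to close, because two candidates $X^{1},X^{2}$ must agree with the already-constructed solution on $[-T,kb\wedge T]$, so $\|X^{1}_{s}-X^{2}_{s}\|$ reduces to a supremum over $[kb\wedge T,s]$ and the resulting inequality is amenable to Gronwall. A secondary point to flag is that $({\bf A3})$ does not explicitly impose $\E[\sup_{-T\le s\le 0}|\varphi(s)|^{2}]<\infty$, which is implicit in working inside $\mathcal{S}^{2}_{\varphi}(\R)$; one either adds it as a standing hypothesis or first constructs the solution pathwise and then recovers the $L^{2}$ bound from the a priori estimate obtained in the contraction step.
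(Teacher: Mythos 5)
The paper does not actually prove this theorem: it simply invokes Proposition 2.1$(i)$ of \cite{CA} and records the statement. Your method-of-steps argument is therefore a genuinely different (and more informative) self-contained route, and it is the standard way to handle such equations: on $I_k=[kb\wedge T,(k+1)b\wedge T]$ the delayed argument $s-b$ falls into the already-constructed region, so the diffusion coefficient becomes a known adapted process and only the functional drift couples to the unknown, which $({\bf A1})$ controls. Your observation that $\|X^1_s-X^2_s\|$ collapses to a supremum over $[kb\wedge T,s]$ because the two candidates share the constructed past is exactly the point that makes the Gronwall step close.

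One step does need repair before the argument is complete. Under $({\bf A2})$ alone, $g$ is merely continuous, so $\sup_{s\in I_k}|g(S(s-b))|$ is finite but \emph{random}; the Burkholder--Davis--Gundy estimate for the stochastic-integral term then produces a random Lipschitz constant, and a strict contraction on $[kb\wedge T,(kb\wedge T)+\delta]$ with a \emph{deterministic} $\delta$ in the $\E[\sup|\cdot|^2]$ norm cannot be extracted as you state. You must either localize with the stopping times $\tau_n=\inf\{t\in I_k: |g(S(t-b))|\geq n\}$, solve up to $\tau_n$, and let $n\to\infty$, or exploit the fact that on $I_k$ the equation is affine in the unknown with known coefficients and solve it explicitly by variation of constants (which is in effect what the paper does later in its corollaries). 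Two further points worth recording: $({\bf A1})$ only defines $f$ on positive paths in $\mathcal{C}([-T,0],\mathbb{R}^{*}_{+})$, so a Picard iterate need not stay in the domain of $f$ unless you first extend $f$ Lipschitz-continuously to all of $\mathcal{C}([-T,0],\R)$ or prove positivity along the iteration; and, as you correctly flag, $({\bf A3})$ does not give $\E[\sup_{-T\le s\le 0}|\varphi(s)|^2]<\infty$, so membership of the solution in $\mathcal{S}^2_{\varphi}(\R)$ requires either that extra hypothesis or a purely pathwise construction followed by localization.
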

In the sequel, we will deal with these following two cases of $f$. For this let recall $\alpha$ be probability measure on $([-T,-a],\mathcal{B}([-T,-a]))$.

\subsection{Case 1: $\displaystyle f(t,s_t)=\int_{-T}^{-a}\mu(t+u)s(t+u)\alpha(du)$}
Let $f$ be defined by 
\begin{eqnarray}\label{F1}
f(t,s_t)=\int_{-T}^{-a}\mu(t+u)s(t+u)\alpha(du),	
\end{eqnarray}
where
\begin{eqnarray*} 
\begin{cases}
\mu(t)=\mu, \ t\in[-a,T] \\
  \mu(t)=0,\ t\in[-T,-a].
 \end{cases} 
 \end{eqnarray*}
In this situation, the generalized delayed SDE \eqref{GDSDE} becomes 
\begin{eqnarray}\label{T1}
\begin{cases}
S(t)=\varphi(0)+\mu\displaystyle\int_{0}^{t}\displaystyle\int_{-T}^{-a}\mu(s+u)S(s+u)\alpha(du)ds+\displaystyle\int_{0}^{t}g(S(s-b))S(s)dW(s), \ t\in[0,T] \\\\
  S(t)=\varphi(t),\ t\in[-T,0].
 \end{cases} 
 \end{eqnarray}
\begin{remark}
Let for example taking $\alpha=\delta_{-a}$ then 
\begin{eqnarray*}
\int_{-T}^{-a}\mu(t+u)S(t+u)\alpha(du)&=&\int_{-T}^{-a}\mu(t+u)S(t+u)\delta_{-a}(du)\\
	&=& \mu(t-a) S(t-a)
\end{eqnarray*}
and equation \eqref{T1} coincide with SDDE $(2)$ appeared in section 2.1 in \cite{Aal}.
\end{remark}
 We are now ready to derive our first result in this section.
\begin{theorem}\label{Theorem 1}
 Assume $({\bf A2})$-$({\bf A3})$. Then GDSDE  \eqref{T1} has a unique solution. Furthermore, let $(S(t))_{t\geq 0}$ be it solution. Then we have 
\begin{eqnarray}\label{SA}
S(t)=\varphi(0)+\mu\int_{-a}^{t-a}\alpha([-T\wedge (s-t),-a])S(s)ds+\int_{-a}^{t-a}g(S(s-b))S(s)dW(s),\;\; t\in [0,T].
\end{eqnarray}
\end{theorem}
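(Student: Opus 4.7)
The proof splits into two parts: first the well-posedness of \eqref{T1}, then the explicit rewrite \eqref{SA}.

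For existence and uniqueness, the plan is to check that the concrete drift
\[f(t,s_t)=\int_{-T}^{-a}\mu(t+u)\,s(t+u)\,\alpha(du)\]
satisfies assumption (A1), which immediately reduces the claim to Theorem \ref{T01}. Since $|\mu(\cdot)|\leq |\mu|$ and $\alpha$ is a probability measure,
\[|f(t,s^1_t)-f(t,s^2_t)|\leq |\mu|\int_{-T}^{-a}|s^1(t+u)-s^2(t+u)|\,\alpha(du)\leq |\mu|\,\|s^1_t-s^2_t\|,\]
so (A1) holds with $K=|\mu|$; combined with (A2) and (A3) this invokes Theorem \ref{T01} directly.

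For the representation \eqref{SA} the diffusion term is left alone and the whole content lies in rewriting the drift
\[I(t):=\int_0^t\!\!\int_{-T}^{-a}\mu(s+u)\,S(s+u)\,\alpha(du)\,ds.\]
I would proceed in four moves: (i) Fubini to exchange the $ds$ and the $\alpha(du)$ integrations; (ii) in the inner $ds$-integral substitute $v=s+u$, turning it into $\int_u^{t+u}\mu(v)S(v)\,dv$; (iii) use the explicit form $\mu(v)=\mu\,\mathbf{1}_{[-a,T]}(v)$ to shrink the $v$-range to $v\geq -a$; and (iv) apply Fubini a second time to put $v$ on the outside.

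The only non-routine piece is the bookkeeping of the joint integration domain after the change of variable. The image of $[0,t]\times[-T,-a]$ under $(s,u)\mapsto(v,u)$ is the set $\{(v,u):-T\leq u\leq -a,\ u\leq v\leq t+u\}$, and intersecting with $\{v\geq -a\}$ restricts $v$ to $[-a,t-a]$; for each such $v$ the admissible slice in $u$ is $[\max(-T,v-t),-a]$, which carries $\alpha$-mass $\alpha([(-T)\vee(v-t),-a])$. (I read the $\wedge$ in the statement as the upper operation $\vee$, since with $\min$ the mass would collapse to $1$.) Plugging this back into $I(t)$ reproduces exactly the drift displayed in \eqref{SA}, and this slicing step is where I expect the only genuine care to be required.
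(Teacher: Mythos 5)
Your proposal is correct and follows essentially the same route as the paper: verify the Lipschitz condition (A1) with constant $|\mu|$ to invoke Theorem \ref{T01}, then rewrite the drift via Fubini, the substitution $v=s+u$, and a second Fubini, with the $\alpha$-mass of the slice $[(-T)\vee(v-t),-a]$ emerging from the domain bookkeeping. Your reading of the $\wedge$ in \eqref{SA} as $\vee$ matches the paper's own intermediate computation, which indeed uses $-T\vee(r-t)$ before specializing to the region where $\mu(\cdot)$ vanishes.
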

\begin{proof}
In view of \eqref{F1}, it not difficult to prove that $f$ is $\mu$-Lipschitz. Then according to Theorem \ref{T01}, the delayed SDE \eqref{T1} admit a unique solution. It remain to derive \eqref{SA}. For this, let set  
\begin{eqnarray*}
	I(t)=\int_{0}^{t}\int_{-T}^{-a}\mu(s+u)S(s+u)\alpha(du)ds. 
\end{eqnarray*}
It follows from Fubini's theorem and the change of variable that 
\begin{eqnarray*}
I(t)&=&\int_{-T}^{-a}\left(\int_{u}^{t+u}\mu(r)S(r)dr\right)\alpha(du)\\
&=&\int_{-T}^{-a}\left(\int_{-T}^{t-a}\mu(r)S(r){\bf 1}_{[u,t+u]}(r)dr\right)\alpha(du)\\
&=&\int_{-T}^{0}\left(\int_{-T}^{t-a}\mu(r)S(r){\bf 1}_{[(r-t),\; r]}(u)dr\right)\alpha(du)\\
&=&\int_{-T}^{t-a}\mu(r)S(r)\left(\int_{-T}^{0}{\bf 1}_{[r-t,\; r]}(u)\alpha(du)\right)dr\\
&=&\int_{-T}^{t}\mu(r)S(r)\alpha\left([-T\vee(r-t),\; -a\wedge r])\right)dr\\
&=&\int_{-T}^{-a}\mu(r)S(r)\alpha\left([-T\vee(r-t),\; -a\wedge r])\right)dr+\int_{-a}^{t-a}\mu(r)S(r)\alpha\left([-T\vee(r-t),\; -a\wedge r])\right)dr.
\end{eqnarray*}
Since for all $r\in[-T,-a],\; \mu(r)=0$, we have
\begin{eqnarray*}
I(t)  =\int_{-a}^{t-a}\alpha\left([-T\vee(r-t),\; -a])\right)S(r)dr.
\end{eqnarray*}
Putting this in \eqref{T1} we obtain the result.
\end{proof}
\begin{remark}
Equation \eqref{SA} is the Volterra SDE. It has a unique solution which can not be determined explicitly in general case.
\end{remark}

\begin{corollary}
Assume $({\bf A2})$-$({\bf A5})$. Then SDE \eqref{SA} admit almost surely positive solution. Moreover, setting $l = \min(a,b,T-a)> 0$, we get, for all $t\in [0,T]$, 
\begin{eqnarray}\label{S1}
S(t)&=&\varphi(0)\exp\left(\int_{0}^{t}g(S(s-b))dW(s)+\frac{1}{2}\int_{0}^{t}g^2(S(s-b))ds\right)\\
 &&+\mu\left[\int_{0}^{t}\exp\left(\int_{s}^{t}g(S(r-b))dW(r)+\frac{1}{2}\int_{s}^{t}g^2(S(r-b))dr\right)\psi(s)ds\right]{\bf 1}_{[0,T-a]}(t)\nonumber\\
 &&+\mu\left[\int_{0}^{t}\exp\left(\int_{s}^{t}g(S(r-b))dW(r)+\frac{1}{2}\int_{s}^{t}g^2(S(r-b))dr\right)S(s)ds\right]{\bf 1}_{[T-a,T]}(t),\nonumber
 \end{eqnarray}
 where
 \begin{eqnarray*}
\psi(s)=\frac{1}{T-a}\sum_{k=1}^{p}\left(\int_{(k-1)l-a}^{s-a}S(r)dr\right){\bf 1}_{[(k-1)l,kl]}(s)+\frac{1}{T-a}\left(\int_{pl-a}^{s-a}S(u)\alpha(du)\right){\bf 1}_{[pl,T-a]}(s),
\end{eqnarray*}
with $p\in \N^{*}$ such that $pl\leq T-a <(p+1)l$
\end{corollary}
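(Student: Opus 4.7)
The plan is to resolve \eqref{SA} by the method of steps on the partition of $[0,T]$ with mesh $l=\min(a,b,T-a)>0$. On each subinterval $[kl,(k+1)l]$, the delayed arguments $s-a$ and $s-b$ shift into previously resolved subintervals (or into $[-T,0]$, where the values are prescribed by $({\bf A3})$), so the diffusion coefficient $g(S(\cdot-b))$ and the ``past'' portion of the drift become known predictable processes, and \eqref{SA} reduces to a linear It\^o SDE in $S$. Positivity then follows by induction on $k$: under $({\bf A2})$ and $({\bf A4})$ the reduced equation has the form $dS=H\,dt+GS\,dW$ with $\mathcal{F}_{t}$-predictable $H,G$, and the Dol\'eans--Dade variation-of-constants representation yields a strictly positive solution whenever $S(kl)>0$, which holds by the inductive hypothesis and $\varphi>0$.

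For the explicit formula \eqref{S1}, I would apply the integrating factor
\begin{equation*}
\Phi(t)=\exp\!\left(\int_{0}^{t}g(S(s-b))\,dW(s)-\tfrac{1}{2}\int_{0}^{t}g^{2}(S(s-b))\,ds\right),
\end{equation*}
(with the sign of the $g^{2}$ term fixed by It\^o so that $d\Phi=g(S(\cdot-b))\Phi\,dW$) and compute $d(S\Phi^{-1})$ to kill the martingale part of \eqref{SA}. This gives $S(t)=\varphi(0)\Phi(t)+\int_{0}^{t}\bigl(\Phi(t)/\Phi(s)\bigr)F(s)\,ds$, where $F$ is the drift of \eqref{SA}. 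Invoking $({\bf A5})$, the measure $\alpha$ has uniform density $(T-a)^{-1}$ on $[-T,-a]$, so for $r\in[-a,t-a]$ one has $-T\vee(r-t)=r-t$ and therefore $\alpha([-T\vee(r-t),-a])=(t-a-r)/(T-a)$. Plugging this into the Fubini identity already established in Theorem~\ref{Theorem 1} and splitting the inner integral according to whether $r\leq 0$ (so $S(r)=\varphi(r)$ is data) or $r>0$ (so $S(r)$ is the value built on a previous slab), the contributions regroup exactly into the piecewise summation defining $\psi(s)$ on $[0,T-a]$, while on $[T-a,T]$ the drift retains a genuine copy of $S(s)$ and produces the second indicator term in \eqref{S1}.

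The main obstacle is the bookkeeping in this last step. One must align the indices $k$ with the slabs of length $l$, maintain the correct orientation of the Fubini swap inherited from Theorem~\ref{Theorem 1}, and handle the transition at $s=T-a$ so that exactly one of the two indicator terms in \eqref{S1} is active on each piece of $[0,T]$. I would also double-check the sign of $\tfrac12$ in front of $\int_{s}^{t}g^{2}(S(r-b))\,dr$ in \eqref{S1}, since the It\^o correction in the integrating factor forces a minus sign; if the statement is to hold as printed, this would have to be absorbed into an alternative form of the integrating factor. The remaining algebra---induction, Dol\'eans exponential, change of order of integration against a uniform $\alpha$---is routine once the bookkeeping is fixed.
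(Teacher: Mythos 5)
Your proposal follows essentially the same route as the paper: the method of steps on slabs of length $l=\min(a,b,T-a)$, reduction on each slab to a linear It\^o SDE with the delayed arguments frozen from the previous step, the variation-of-constants (Dol\'eans--Dade) representation for the explicit formula, and induction for positivity using $\varphi>0$. Your remark about the sign of $\tfrac{1}{2}\int_{s}^{t}g^{2}(S(r-b))\,dr$ is well taken: the It\^o correction does force a minus sign there, and the $+\tfrac{1}{2}$ printed in \eqref{S1} (and repeated in the paper's own proof) appears to be a typo, since the analogous representation in Theorem \ref{Theorem 2} correctly carries $-\tfrac{1}{2}$.
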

\begin{proof}
According to Theorem \ref{Theorem 1} and assumption $({\bf A5})$, equation \eqref{SA} will be transform to 
\begin{eqnarray}\label{S11}
S(t)&=&\varphi(0)+\frac{\mu}{T-a}\int_{-a}^{t-a}(T\wedge (t-s)-a)S(s)ds+\int_{0}^{t}g(S(s-b))S(s)dW(s)\nonumber\\
&=& \varphi(0)+\frac{\mu}{T-a}\left(\int_{-a}^{t-a}(t-s-a)S(s)ds\right){\bf1}_{[0,T-a]}(t)\nonumber\\
&&+\frac{\mu}{T-a}\left(\int_{-a}^{t-a}(T-a)S(s)ds\right){\bf1}_{[T-a,T]}(t)+\int_{0}^{t}g(S(s-b))S(s)dW(s) .
 \end{eqnarray}
For $t\in [0,l]$, it follows from \eqref{S11} that 
\begin{eqnarray*}
S(t)&=&\varphi(0)+ \frac{\mu}{T-a}\int_{-a}^{t-a}(t-s-a)\varphi(s)ds+\int_{0}^{t}g(\varphi(s-b))S(s)dW(s).
 \end{eqnarray*}
whose differential form is given by
 \begin{eqnarray*}
dS(t)=\frac{\mu}{T-a}\left(\int_{-a}^{t-a}\varphi(r)dr\right) dt+g(\varphi(t-b))S(t)dW(t),\;\; S(0)=\varphi(0).
 \end{eqnarray*}
Set $(S^{1}(t))_{0\leq t\leq l}$ its solution. Using the standard method of resolution to linear SDE, we obtain
\begin{eqnarray*} 
 S^1(t)&=&\varphi(0)\exp\left(\int_{0}^{t}g(\varphi(s-b))dW(s)+\frac{1}{2}\int_{0}^{t}g^2(\varphi(s-b))ds\right)\\
 &&+\int_{0}^{t}\exp\left(\int_{s}^{t}g(\varphi(r-b))dW(r)+\frac{1}{2}\int_{s}^{t}g^2(\varphi(r-b))dr\right)\psi(s)ds,
 \end{eqnarray*} 
where 
\begin{eqnarray*}
\psi^1(s)=\frac{\mu}{T-a}\int_{-a}^{s-a}\varphi(r)dr.
\end{eqnarray*}
 Therefore, since $\varphi$ is a positive function, the process $S^1$ is almost surely positive.
  
 Let now consider $S^2$ the solution of \eqref{T1} on $[l,2l]$.
 
 If $2l<T-a$, hence it follows from \eqref{S11} that for $t\in [l,2l]$  
\begin{eqnarray*}
S^2(t)&=& S^1(l)+\frac{\mu}{T-a}\int_{l-a}^{t-a}(t-s-a)S^2(s)ds+\int_{l}^{t}g(S^2(s-b))S^2(s)dW(s)\\
&=& S^1(l)+\frac{\mu}{T-a}\int_{l-a}^{t-a}(t-s-a)S^1(s)ds+\int_{l}^{t}g(S^1(s-b))S^2(s)dW(s).
 \end{eqnarray*}
 Using the similar argument as above we obtain for all $t\in [l,2l]$
\begin{eqnarray*} 
 S^2(t)&=& S^1(l)\exp\left(\int_{l}^{t}g(S^1(s-b))dW(s)+\frac{1}{2}\int_{l}^{t}g^2(S^1(s-b))ds\right)\\
 &&+\int_{l}^{t}\exp\left(\int_{s}^{t}g(S^1(r-b))dW(r)+\frac{1}{2}\int_{s}^{t}g^2(S^1(r-b))dr\right)\psi^2(s)ds,
 \end{eqnarray*}
 where 
\begin{eqnarray*}
\psi^2(s)=\frac{\mu}{T-a}\int_{l-a}^{s-a}S^1(r)dr.
\end{eqnarray*}
Since $S^1$ is positive, we have $S^2(t)>0$ for all  $t\in [l,2l]$.
 
If $2l>T-a$, according to \eqref{S11} we get respectively  
\begin{eqnarray*}
dS^2(t)&=& \frac{\mu}{T-a}\left(\int_{l-a}^{t-a}S^1(s)ds\right)+g(S^1(t-b))S^2(s)dW(s), \; t\in[l,T-a], S^2(l)=S^(l)
\end{eqnarray*}
and 
\begin{eqnarray*}
S^2(t)&=& S^1(T-a)+\mu\left(\int_{l-a}^{t-a}S^1(s)ds\right)+\int_{l}^{t}g(S^2(s-b))S^2(s)dW(s),\;\; t\in[T-a,2l].
 \end{eqnarray*}

Use again the same method as above we have
\begin{eqnarray*}
S^2(t)&=& S^1(l)\exp\left(\int_{l}^{t}g(S^1(s-b))dW(s)+\frac{1}{2}\int_{l}^{t}g^2(S^1(s-b))ds\right)\\
 &&+\left(\int_{l}^{t}\exp\left(\int_{s}^{t}g(S^1(r-b))dW(r)+\frac{1}{2}\int_{s}^{t}g^2(S^1(r-b))dr\right)\psi^2(s)ds\right){\bf 1}_{[l,T-a]}(t)\\
 &&+\left(\int_{l}^{t}\exp\left(\int_{s}^{t}g(S^1(r-b))dW(r)+\frac{1}{2}\int_{s}^{t}g^2(S^1(r-b))dr\right)S^1(s)ds\right){\bf 1}_{[T-a,2l]}(t).
\end{eqnarray*}
which together with the fact that $S^1$ is positive a.s. implies $S^2$ is positive.

Since $0<T<+\infty$, it is well know that there exists an integer $0<p<n$ such that $pl<T-a\leq (p+1)l$ and $nl<T\leq (n+1)l$. We define recursively  $(S^k)_{k=1}^{n+1}$ on $[2l,3l],\cdots,[pl,T-a],[T-a, (p+1)l],\cdots,[nl,T]$ the solution of \eqref{T1} as follows. For $k=1,\cdots, p$,
\begin{eqnarray*} 
S^k(t)&=& S^{k-1}(l)\exp\left(\int_{(k-1)l}^{t}g(S^{k-1}(s-b))dW(s)+\frac{1}{2}\int_{(k-1)l}^{t}g^2(S^{k-1}(s-b))ds\right)\\
 &&+\int_{(k-1)l}^{t}\exp\left(\int_{s}^{t}g(S^{k-1}(r-b))dW(r)+\frac{1}{2}\int_{s}^{t}g^2(S^{k-1}(r-b))dr\right)\psi^k(s)ds,\;\;\;\; t\in [(k-1)l,kl],
\end{eqnarray*}
 where 
 \begin{eqnarray*}
\psi^k(s)=\frac{\mu}{T-a}\int_{(k-1)l-a}^{s-a}S^{k-1}(r)dr
\end{eqnarray*}
and
\begin{eqnarray*} 
\widetilde{S}^p(t)&=& S^{p}(l)\exp\left(\int_{pl}^{t}g(S^{p}(s-b))dW(s)+\frac{1}{2}\int_{pl}^{t}g^2(S^{p}(s-b))ds\right)\\
 &&+\int_{pl}^{t}\exp\left(\int_{s}^{t}g(S^{p}(r-b))dW(r)+\frac{1}{2}\int_{s}^{t}g^2(S^{p}(r-b))dr\right)\widetilde{\psi}^p(s)ds,\;\; t\in [pl,T-a],
\end{eqnarray*}
 where 
 \begin{eqnarray*}
\widetilde{\psi}^p(s)=\frac{\mu}{T-a}\int_{pl-a}^{s-a}S^{p}(r)dr.
\end{eqnarray*}

We have also
\begin{eqnarray*} 
S^{p+1}(t)&=& \widetilde{S}^p(T-a)\exp\left(\int_{T-a}^{t}g(\widetilde{S}^p(s-b))dW(s)+\frac{1}{2}\int_{T-a}^{t}g^2(\widetilde{S}^p(s-b))ds\right)\\
 &&+\mu\int_{T-a}^{t}\exp\left(\int_{s}^{t}g(\widetilde{S}^p(r-b))dW(r)+\frac{1}{2}\int_{s}^{t}g^2(\widetilde{S}^p(r-b))dr\right)\widetilde{S}^p(s)ds,\;\; t\in [T-a,(p+1)l],
\end{eqnarray*}
\begin{eqnarray*} 
S^k(t)&=& S^{k-1}(l)\exp\left(\int_{(k-1)l}^{t}g(S^{k-1}(s-b))dW(s)+\frac{1}{2}\int_{(k-1)l}^{t}g^2(S^{k-1}(s-b))ds\right)\\
 &&+\mu\int_{(k-1)l}^{t}\exp\left(\int_{s}^{t}g(S^{k-1}(r-b))dW(r)+\frac{1}{2}\int_{s}^{t}g^2(S^{k-1}(r-b))dr\right)S^{k-1}(s)ds,\;\; t\in [(k-1)l,kl],
\end{eqnarray*}
for $k=p+2,\cdots,n$
and 
\begin{eqnarray*} 
S^{n+1}(t)&=& S^{n}(l)\exp\left(\int_{nl}^{t}g(S^{n}(s-b))dW(s)+\frac{1}{2}\int_{nl}^{t}g^2(S^{n}(s-b))ds\right)\\
 &&+\mu\int_{nl}^{t}\exp\left(\int_{s}^{t}g(S^{n}(r-b))dW(r)+\frac{1}{2}\int_{s}^{t}g^2(S^{n}(r-b))dr\right)S^{n}(s)ds,\;\; t\in [nl,T].
\end{eqnarray*}
By a similar argument, it follows that $S^{k}, k=1,\cdots, n+1$ and $\widetilde{S}^{p}$ are almost surely positive.

Let consider the processes $(S(t))_{t\in[0,T]}$ and $(\psi(t))_{t\in[0,T]}$ define respectively by 
\begin{eqnarray*}
	S(t)&=&\sum_{k=1}^{p}S^{k}(t){\bf 1}_{[(k-1)l,kl]}(t)+S^{p+1}(t){\bf 1}_{[pl,T-a]}(t)+S^{p+1}(t){\bf 1}_{[T-a, (p+1)l]}(t)\\
	 &&+\sum_{k=p+2}^{n}S^{k}(t){\bf 1}_{[(k-1)l,kl]}(t)+S^{n+1}(t){\bf 1}_{[nl,T]}(t),
\end{eqnarray*}
and
\begin{eqnarray*}
\psi(t)=\sum_{k=1}^{p}\psi^{k}(t){\bf 1}_{[(k-1)l,kl]}(t)+\widetilde{\psi}^{p}(t){\bf 1}_{[pl,T-a]}(t).
\end{eqnarray*} 

Finally $S(t)>0$ for all $t\in [0,T]$ a.s. Moreover it not difficult to  derive \eqref{S1} which end the proof.
 \end{proof}
\subsection{Case 2: $\displaystyle f(t,s_t)=\int_{-T}^{-a}\mu(s+u)s(t+u)\alpha(du)s(t)$}

Let us consider $f$ defined by 
\begin{eqnarray*}
f(t,s_t)=\int_{-T}^{-a}\mu(s+u)s(t+u)\alpha(du)s(t).	
\end{eqnarray*}
Therefore the generalized delayed SDE \eqref{GDSDE} becomes 
\begin{eqnarray}\label{T2}
	S(t)&=&\varphi(0)+\int_{0}^{t}\int_{-T}^{-a}\mu(s+u)S(s+u)\alpha(du)S(s)ds+\int_{0}^{t}g(S(s-b))S(s)dW(s),\ t\in[0,T] \nonumber\\
  S(t)&=&\varphi(t),\ t\in[-T,0].
\end{eqnarray} 
 We have this result.
\begin{theorem} \label{Theorem 2}
Assume $({\bf A1})$-$({\bf A3})$. If $\varphi(0)>0$, then a unique solution of GDSDE  \eqref{T2} is positive. Moreover, setting $l=\min(a,b)$, we obtain,   for $t\in[0,T]$,
\begin{eqnarray*}
S(t)=\varphi(0)\exp\left(\mu\int_{0}^{t}\Psi(s)ds-\frac{1}{2}\int_{0}^{t}g^{2}(S(s-b))ds +\int_{0}^{t}g(S(s-b))) dW(s)\right),	
\end{eqnarray*}
  where 
\begin{eqnarray*}
\Psi(s)=\sum_{k=1}^{p}\left(\int_{(k-1)l-a}^{s-a}S(u)\alpha(du)\right){\bf 1}_{[(k-1)l,kl]}(s)+\left(\int_{pl-a}^{s-a}S(u)\alpha(du)\right){\bf 1}_{[pl,T]}(s),
\end{eqnarray*}
with $p\in \N^{*}$ such that $pl\leq T<(p+1)l$
\end{theorem}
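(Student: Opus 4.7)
The plan is to use the method of steps with step length $l=\min(a,b)$. This choice ensures that on each interval $[(k-1)l,kl]$ both the diffusion delay $s-b$ and the drift delays $s+u$ (for $u\in[-T,-a]$) lie in $[-T,(k-1)l]$, so $g(S(s-b))$ and $\int_{-T}^{-a}\mu(s+u)S(s+u)\alpha(du)$ are already-constructed $\mathcal{F}_t$-adapted processes on this interval. Equation \eqref{T2} then reduces on each step to a \emph{purely linear} (multiplicative) SDE, which makes both positivity and an exponential closed form available.

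First, Theorem \ref{T01} under $({\bf A1})$--$({\bf A3})$ gives existence and uniqueness of the global solution $S$. On the initial step $[0,l]$, since $s-b\leq 0$ and $s+u\leq l-a\leq 0$, all delayed arguments hit the deterministic datum $\varphi$, and \eqref{T2} becomes
\begin{eqnarray*}
dS(t)=\mu\,\Phi^{1}(t)\,S(t)\,dt+g(\varphi(t-b))\,S(t)\,dW(t),\quad S(0)=\varphi(0)>0,
\end{eqnarray*}
with $\Phi^{1}(t)=\int_{-T}^{-a}\mu(t+u)\varphi(t+u)\alpha(du)$. The standard stochastic exponential formula gives
\begin{eqnarray*}
S(t)=\varphi(0)\exp\left(\mu\int_{0}^{t}\Phi^{1}(s)\,ds-\tfrac{1}{2}\int_{0}^{t}g^{2}(\varphi(s-b))\,ds+\int_{0}^{t}g(\varphi(s-b))\,dW(s)\right),
\end{eqnarray*}
which is a.s.\ strictly positive.

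Next, I would iterate: assuming $S$ has been constructed and shown a.s.\ positive on $[0,(k-1)l]$, the same reasoning on $[(k-1)l,kl]$ (and on $[pl,T]$ for the last step) yields
\begin{eqnarray*}
S(t)=S((k-1)l)\exp\left(\mu\int_{(k-1)l}^{t}\Phi^{k}(s)\,ds-\tfrac{1}{2}\int_{(k-1)l}^{t}g^{2}(S(s-b))\,ds+\int_{(k-1)l}^{t}g(S(s-b))\,dW(s)\right),
\end{eqnarray*}
with $\Phi^{k}(s)=\int_{-T}^{-a}\mu(s+u)S(s+u)\alpha(du)$. Positivity propagates along the induction, and multiplying these consecutive exponentials telescopes (the exponents add on abutting intervals) into a single exponential on $[0,t]$. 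The claimed identity then follows once the piecewise drift $\sum_{k=1}^{p}\Phi^{k}\mathbf{1}_{[(k-1)l,kl]}+\Phi^{p+1}\mathbf{1}_{[pl,T]}$ is identified with $\Psi$.

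The main obstacle will be this identification: on each step one must perform the change of variables $r=s+u$, use that $\mu$ vanishes on $[-T,-a]$ so that integration effectively starts at $r=-a$ (i.e.\ $u=-a-s$), and transport $\alpha$ under the shift to rewrite $\Phi^{k}(s)$ as $\int_{(k-1)l-a}^{s-a}S(u)\alpha(du)$. This is the same Fubini-and-shift bookkeeping used in the proof of Theorem \ref{Theorem 1}, but simpler here: because the drift is multiplicative in $S(t)$, no inhomogeneous term appears, and each step produces a single geometric Brownian motion rather than the sum of exponentials required in Case~1.
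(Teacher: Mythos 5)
Your proposal is correct and follows essentially the same route as the paper: the paper also reduces \eqref{T2} to the linear SDE $dS(t)=\mu\Psi(t)S(t)\,dt+g(S(t-b))S(t)\,dW(t)$, solves it on $[0,l]$ with $l=\min(a,b)$ by the stochastic exponential (obtaining positivity from $\varphi(0)>0$), and then proceeds by the same step-by-step induction as in Theorem \ref{Theorem 1}. The only cosmetic difference is ordering: the paper performs the change of variables identifying the drift with $\Psi(s)S(s)$ once at the outset (via the shift of $\alpha$ and the vanishing of $\mu$ on $[-T,-a]$), whereas you defer that bookkeeping to the final identification step.
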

\begin{proof}
Let set
\begin{eqnarray*}
I(t)&=&\int_{0}^{t}\int_{-T}^{-a}\mu(s+u)S(s+u)\alpha(du)S(s)ds
\end{eqnarray*} 
Since $\alpha$ is a translation invariant measure on $[-T,0]$, we have
\begin{eqnarray*}
I(t)&=&\int_{0}^{t}\int_{s-T}^{s-a}\mu(r)S(r)\alpha(dr)S(s)ds,\\
&=& \int_{0}^{t}\left(\int_{s-T}^{-a}\mu(r)\varphi(r)\alpha(dr)+\mu\int_{-a}^{s-a}S(r)\alpha(dr)\right)S(s)ds.
\end{eqnarray*} 
Since $\mu(t)=0, t\in [-T,-a]$, we have
\begin{eqnarray}\label{W}
I(t)&=& \int_{0}^{t}\Psi(s)S(s)ds.
\end{eqnarray}
According to \eqref{T2}, il follows from \eqref{W} that
\begin{eqnarray}\label{S2}
	S(t)=\varphi(0)+\mu\int_{0}^{t}\Psi(s)S(s)ds+\int_{0}^{t}g(S(s-b))S(s)dW(s).
\end{eqnarray}
For $t\in [0,l]$, we have $t-a\in [-T,0]$ and  $t-b\in [-T,0]$.  Next for all $s\in [0,t],\; S(s-b)=\varphi(s-b)$ and $S(s)=\varphi(s)$ for all $s\in [-a,t-a]$.
Therefore 
\begin{eqnarray*}
S^1(t)=\varphi(0)+\mu\int_{0}^{t}\Psi(s)S(s)ds+\int_{0}^{t}g(\varphi(s-b))S(s)dW(s),
\end{eqnarray*}
where
\begin{eqnarray*}
	\Psi^1(s)=\mu\int^{s-a}_{-a}\varphi(r)\alpha(dr).
\end{eqnarray*}
Next using Girsanov theorem, we have for all $t\in [0,b]$,
\begin{eqnarray*}
S^1(t)=\varphi(0)\exp\left(\mu\int_{0}^{t}\Psi^1(s)ds-\frac{1}{2}\int_{0}^{t}g^2(\varphi(s-b))ds +\int_{0}^{t}g(\varphi(s-b))dW(s)\right).
\end{eqnarray*}
Therefore as $\varphi(0)>0$ we have $S^1(t)>0$. the rest of this proof is similar to one appear in the proof of Theorem \ref{Theorem 1} so we omit it.  
 \end{proof}
  
 \begin{example}
 Assume $({\bf A5})$. Then GDSDE  \eqref{T2} becomes:
 \begin{eqnarray*}
\begin{cases}
S(t)=\varphi(0)+\frac{\mu}{T-a}\displaystyle\int_{0}^{t}\left(\int_{-a}^{s-a}S(r)dr\right)S(s)ds+\displaystyle\int_{0}^{t}g(S(s-b)) S(s)dW(s), \ t\in[0,T] \\\\
  S(t)=\varphi(0),\ t\in[-T,0].
 \end{cases} 
 \end{eqnarray*}
Moreover, we get  
\begin{eqnarray*}
 S(t)=\varphi(0)\exp\left(\frac{\mu}{T-a}\int_{0}^{t} \Psi(s) ds-\frac{1}{2}\int_{0}^{t}g^{2}(S(s-b))ds+\int_{0}^{t}g(S(s-b))dW(s)\right).
\end{eqnarray*}
 where 
\begin{eqnarray*}
\Psi(s)=\sum_{k=1}^{p}\left(\int_{(k-1)l-a}^{s-a}S(u)du\right){\bf 1}_{[(k-1)l,kl]}(s)+\left(\int_{pl-a}^{s-a}S(u)du\right){\bf 1}_{[pl,T]}(s),
\end{eqnarray*}
with $p\in \N^{*}$ such that $pl\leq T<(p+1)l$.
 \end{example}

\section{A option price model with a delayed risk and risk-free asset}
This section is devoted to evaluate the price of European options with an underlying asset describe by a generalized delayed stochastic differential equations (GDSDEs, short) study in Section 2.

Roughly speaking, let consider a simple financial market containing a risk-free asset (a bond or bank account) $(B(t))_{t\geq 0}$ and single stock whose price $(S(t))_{t\geq 0}$ satisfies respectively GDSDE \eqref{T1} and \eqref{T2}. There are no transaction costs and the underlying stock pays no dividends in this market. We consider an option, written on the stock, with maturity $\xi$ and terminal time $T$. Our main objective is to derive the fair price of the option at each time $t\in [0,T]$. We shall consider two king of market.

\subsection{Financial risk and risk-free asset market I}
In this market, the free asset is described by $B(t)=e^{r\, t}$ and the single stock satisfies the following GDSDE
\begin{eqnarray*}
S(t)&=&\varphi(0)+\mu\int_{0}^{t}\int_{-T}^{-a}\mu(s+u)S(s+u)\alpha(du)S(s)ds+\int_{0}^{t}g(S(s-b))S(s)dW(s).	
\end{eqnarray*}
Assuming $\alpha$ satisfies $({\bf A5})$, hence the above GDSDE is equivalent to 
 \begin{eqnarray}\label{T21}
S(t)=\varphi(0)+\frac{\mu}{T-a}\int_{0}^{t}\Psi(s)S(s)ds+\int_{0}^{t}g(S(s-b))S(s)dW(s),
 \end{eqnarray}
 where
 \begin{eqnarray*}
\Psi(s)=\sum_{k=1}^{p}\left(\int_{(k-1)l-a}^{s-a}S(u)du\right){\bf 1}_{[(k-1)l,kl]}(s)+\left(\int_{pl-a}^{s-a}S(u)du\right){\bf 1}_{[pl,T]}(s),
\end{eqnarray*}
with $p\in \N^{*}$ such that $pl\leq T<(p+1)l$.
First, we will obtain an equivalent martingale
measure with the help of Girsanov's theorem. For this let 
\begin{eqnarray*}
\widetilde{S}(t)=\frac{S(t)}{B(t)}=e^{-rt}S(t)	
\end{eqnarray*}
and 
be the discounted stock price process.
 \begin{proposition}\label{P1}
 Assume $({\bf A2})$-$({\bf A5})$. Then we derive
 \begin{eqnarray}\label{M1}
 \widetilde{S}(t)&=& \varphi(0)+\int^t_0 \widetilde{S}(s)g(S(s-b))d\widehat{W}(s),
 \end{eqnarray}
 where 
  \begin{eqnarray}\label{G0}
 	\widehat{W}(t)=W(s)+\int^t_0\left(\frac{\mu\Psi(s)-r(T-a)}{(T-a)g(S(s-b))}\right)ds.
 \end{eqnarray}
Moreover, denoting by 
\begin{eqnarray*}
\rho(T)=\exp\left[-\int_{0}^{T}	\frac{\mu\Psi(s)-r(T-a)}{(T-a)g(S(s-b))}dW(s)-\frac{1}{2}\int_{0}^{T}\left(\frac{\mu\Psi(s)-r(T-a)}{(T-a)g(S(s-b))}\right)^2ds\right],
\end{eqnarray*} 
 $(\widehat{W}(t))_{t\geq 0}$ is a standard Wiener process under the measure $\Q$ defined by $d\Q=\rho(T)d\P$.
 \end{proposition}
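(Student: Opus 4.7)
The plan is to derive \eqref{M1} by a direct application of Itô's product rule to the discounted price $\widetilde{S}(t)=e^{-rt}S(t)$, then rewrite the resulting drift as a shift absorbed into the Brownian motion via Girsanov's theorem. First I would compute, using \eqref{T21} in differential form
\begin{eqnarray*}
dS(t)=\frac{\mu}{T-a}\Psi(t)S(t)\,dt+g(S(t-b))S(t)\,dW(t),
\end{eqnarray*}
and the deterministic derivative of $e^{-rt}$, that
\begin{eqnarray*}
d\widetilde{S}(t)=-re^{-rt}S(t)\,dt+e^{-rt}dS(t)=\widetilde{S}(t)\left[\left(\frac{\mu\Psi(t)}{T-a}-r\right)dt+g(S(t-b))\,dW(t)\right].
\end{eqnarray*}
Factoring $g(S(t-b))$ out of the bracket, which is licit because of assumption $({\bf A4})$ together with the a.s. positivity of $S$ ensured by the corollary of Theorem \ref{Theorem 1}, yields
\begin{eqnarray*}
d\widetilde{S}(t)=\widetilde{S}(t)g(S(t-b))\left[dW(t)+\frac{\mu\Psi(t)-r(T-a)}{(T-a)g(S(t-b))}\,dt\right]=\widetilde{S}(t)g(S(t-b))\,d\widehat{W}(t),
\end{eqnarray*}
with $\widehat{W}$ as defined in \eqref{G0}; integration on $[0,t]$ then gives \eqref{M1}.

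For the second part, I would set $\theta(s)=\dfrac{\mu\Psi(s)-r(T-a)}{(T-a)g(S(s-b))}$ and recognise $\rho(T)$ as the Doléans-Dade exponential $\mathcal{E}\!\left(-\int_0^{\cdot}\theta(s)\,dW(s)\right)_T$. Verifying that $(\rho(t))_{0\leq t\leq T}$ is a genuine $\P$-martingale (not just a local martingale) reduces to checking Novikov's condition
\begin{eqnarray*}
\E\left[\exp\!\left(\tfrac{1}{2}\int_{0}^{T}\theta^2(s)\,ds\right)\right]<+\infty,
\end{eqnarray*}
after which Girsanov's theorem immediately delivers that $\widehat{W}$ is a standard Brownian motion under $\Q$ defined by $d\Q=\rho(T)\,d\P$.

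The key obstacle is this Novikov check. Because $g$ is only assumed continuous by $({\bf A2})$ and is merely required to be non-zero by $({\bf A4})$, the integrand $\theta(s)$ could blow up if $g(S(s-b))$ approaches zero or if $\Psi(s)$ is unbounded; $\Psi$ involves the pathwise integral of $S$, which is a.s. continuous but only $L^2$ in the sup norm. The cleanest way forward is to impose implicitly, or note as a standing structural hypothesis for this section, that $g$ is bounded above and bounded away from zero on the range of $S$, so that $\theta$ is bounded on $[0,T]$; Novikov then holds trivially and the Girsanov change of measure is justified. With that in place the two claims of the proposition follow at once, and $\widetilde{S}$ is a $\Q$-local martingale by \eqref{M1}.
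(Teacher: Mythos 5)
Your derivation of \eqref{M1} is exactly the paper's: Itô's product rule applied to $e^{-rt}S(t)$, then factoring out $g(S(t-b))$ using $({\bf A4})$ and the a.s. positivity of $S$. That part is fine.

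The gap is in the second half. You correctly observe that Novikov's condition cannot be verified under $({\bf A2})$--$({\bf A5})$ alone: the pathwise bound on $\theta(s)=\frac{\mu\Psi(s)-r(T-a)}{(T-a)g(S(s-b))}$ coming from the continuity of $S$ on the compact $[-T,T]$ is a \emph{random} constant $C(\omega)$, so $\E\bigl[\exp\bigl(\tfrac12\int_0^T\theta^2(s)\,ds\bigr)\bigr]$ need not be finite. But your proposed remedy --- adding the standing hypothesis that $g$ is bounded above and bounded away from zero on the range of $S$ --- proves a strictly weaker statement than the proposition, which asserts the conclusion under $({\bf A2})$--$({\bf A5})$ only. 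The paper does not go through Novikov at all. It establishes the a.s. pathwise boundedness of $\theta$ (which gives $\int_0^T|\theta(s)|^2ds<+\infty$ a.s., enough to define the stochastic integral and make the Doléans--Dade exponential a local martingale) and then invokes the argument of Arriojas et al.\ \cite{Aal} to get $\E_{\P}\bigl[\exp\bigl(\int_0^t\theta(s)\,dW(s)-\tfrac12\int_0^t|\theta(s)|^2ds\bigr)\bigr]=1$. That argument exploits the delay structure rather than any uniform bound: partition $[0,T]$ into subintervals of length $l=\min(a,b)$; for $s\in[kl,(k+1)l]$ the quantities $S(s-b)$ and $\Psi(s)$ (which involves $S$ only up to time $s-a$) are $\mathcal{F}_{kl}$-measurable, so conditionally on $\mathcal{F}_{kl}$ the increment $\int_{kl}^{(k+1)l}\theta(s)\,dW(s)$ is Gaussian with $\mathcal{F}_{kl}$-measurable, a.s. finite variance $\int_{kl}^{(k+1)l}\theta^2(s)\,ds$, whence the conditional expectation of the corresponding exponential factor is $1$; iterating backwards with the tower property gives total expectation $1$. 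You should replace the Novikov step by this conditioning argument (or at least cite it) rather than strengthening the hypotheses on $g$.
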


 \begin{proof}
  It follows from Itô formula applied to $e^{-rt}S(t)$ that 
 \begin{eqnarray*}
 d\widetilde{S}(t)  &=& e^{-rt}dS(t)-re^{-rt}S(t)dt\\
 &=&e^{-rt}(\mu\,\Psi(t)S(t)dt+g(S(t-b))S(t)dW(t))-re^{-rt}S(t)dt\\
 &=&\widetilde{S}(t)\left[\frac{\mu\,\Psi(t)-r(T-a)}{(T-a)}dt+ g(S(t-b))dW(t)\right].
 \end{eqnarray*}
 According to $({\bf A4})$ and since $\widetilde{S}(0)=\varphi(0)$ we have 
 \begin{eqnarray*}
 \widetilde{S}(t)&=&\varphi(0)+\int^t_0 \widetilde{S}(s)g(S(s-b))d\widehat{W}(s).
 \end{eqnarray*}
 On the other hand, let set
 \begin{eqnarray*}
 \theta(s)=-\frac{\mu\Psi(s)-r(T-a)}{(T-a)g(S(s-b))}.
 \end{eqnarray*}
 Since $S(t)>0$ a.s. for all $t\in[0,T]$ and in view of $({\bf A4})$, the process $\theta$ is well defined and predictable. Moreover, the almost sure boundedness of the process $S$ on $[0,T]$, due to it sample-path continuity on $[0,T]$, and use again $({\bf A4})$ implies that $\theta$ is almost surely bounded on $[0,T]$. Hence
 \begin{eqnarray*}
 	\int^T_0|\theta(s)|^2ds<+\infty.
 \end{eqnarray*}
 Finally using the similar argument used in \cite{Aal} we obtain
 \begin{eqnarray*}
 	\E_{\P}\left[ \exp\left(\int_{0}^{t}\theta(s)dW(s)-\frac{1}{2}\int_{0}^{t}|\theta(s)|^2ds\right)\right]=1.
 \end{eqnarray*}
 Therefore, in view of Girsanov's theorem, the process $\widehat{W}$ defined by \eqref{G0} is a standard Wiener process under the measure $\mathbb{Q} $ defined by $d\Q=\rho(T)d\P$. 
 \end{proof}
 \begin{remark}\label{OP}
 In view of \eqref{M1} the discounted stock price $\widetilde{S}$ is a continuous $\Q$-local martingale. In other words, $\Q$ is
an equivalent local martingale measure. Thus, (see Theorem 7.1 in  \cite{KK})) the market containing $(B,S)$ on $[0,T]$ satisfies the no-arbitrage property which states that there is no admissible self-financing strategy which gives an arbitrage opportunity.  	
 \end{remark}
Let now derive the completeness of the market $(B,S)$ on $[0,T]$. For this let $\xi$ be a positive integrable $\mathcal{F}^{S}_{T}$-measurable payoff of an option associated to the stock and set
\begin{eqnarray*}
	M(t)=\E_{\Q}(e^{-rT}\xi|\mathcal{F}^{S}_{t})=\E_{\Q}(e^{-rT}\xi|\mathcal{F}^{\widehat{W}}_{t}).
\end{eqnarray*}

 \begin{theorem}\label{theorem 20}
  Assume $({\bf A2})$-$({\bf A5})$. Then there is an adapted and square integrable process $\{h(s),\ s\in [0,T]\}$ such that 
   \begin{eqnarray*}
    M(t) =\mathbb{E}_{\mathbb{Q}}(e^{-rT}\xi) +\int_{0}^{t} h(s)d\widehat{W}(s)	
   \end{eqnarray*}
  and the hedging strategy is given by
    \begin{eqnarray*} 
       \pi_{S}(t)=\frac{h(t)}{\tilde{S}(t)g(S(t-b))},\ \pi_{B}(t)=M(t)-\pi_{S}(t)\widetilde{S}(t), \ t\in [0,T]. 
   \end{eqnarray*}
Moreover at any time $t\in [0,T]$, the fair price $V(t)$ of the option is given by the formula 
  \begin{eqnarray}\label{pricing}
  	V (t)=e^{-r(T-t)}\E_{\mathbb{Q}}(\xi|\mathcal{F}^{S}_{t}).
\end{eqnarray}
\end{theorem}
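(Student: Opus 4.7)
The plan is to deduce the claim from the Brownian martingale representation theorem applied under the risk-neutral measure $\Q$, and then to read off the hedging portfolio by matching the self-financing equation with the representation of the discounted payoff martingale.

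First I would establish that $\mathcal{F}^S_t=\mathcal{F}^{\widehat{W}}_t$ for every $t\in[0,T]$, which is what justifies the second equality in the definition of $M(t)$ and, more importantly, what makes representation with respect to $\widehat{W}$ enough to hedge every $\mathcal{F}^S_T$-measurable payoff. One inclusion is immediate: since $S$ solves \eqref{M1} with $\widehat{W}$-driving noise starting from the deterministic datum $\varphi$, we have $\mathcal{F}^S_t\subset\mathcal{F}^{\widehat{W}}_t$. For the reverse inclusion, assumption $({\bf A4})$ together with the almost sure positivity of $S$ (and continuity of $g(S(\cdot-b))$) allow one to invert \eqref{M1} path-by-path and recover $\widehat{W}$ from $\widetilde{S}$, namely
\begin{eqnarray*}
\widehat{W}(t)=\int_0^t \frac{d\widetilde{S}(s)}{\widetilde{S}(s)\,g(S(s-b))},
\end{eqnarray*}
so that $\mathcal{F}^{\widehat{W}}_t\subset\mathcal{F}^{\widetilde{S}}_t=\mathcal{F}^S_t$.

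Second, since $\widehat{W}$ is a standard Brownian motion under $\Q$ by Proposition \ref{P1} and $M$ is a $\Q$-martingale with respect to $\mathcal{F}^{\widehat{W}}_t$ (as a conditional expectation of a fixed integrable random variable), the classical Brownian martingale representation theorem yields an adapted, square-integrable process $h$ with
\begin{eqnarray*}
M(t)=M(0)+\int_0^t h(s)\,d\widehat{W}(s)=\E_\Q(e^{-rT}\xi)+\int_0^t h(s)\,d\widehat{W}(s).
\end{eqnarray*}
Third, I would identify the hedging portfolio by working with the discounted wealth $\widetilde{V}(t)=e^{-rt}V(t)$. The self-financing condition \eqref{portfolio} together with \eqref{M1} gives $d\widetilde{V}(t)=\pi_S(t)\,d\widetilde{S}(t)=\pi_S(t)\widetilde{S}(t)g(S(t-b))\,d\widehat{W}(t)$. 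Choosing $\pi_S(t)=h(t)/(\widetilde{S}(t)g(S(t-b)))$ (well defined thanks to $({\bf A4})$ and the positivity of $S$) forces $d\widetilde{V}(t)=h(t)\,d\widehat{W}(t)=dM(t)$, and the complementary investment in the bond is fixed by $\pi_B(t)=M(t)-\pi_S(t)\widetilde{S}(t)$ so that the portfolio value is exactly $M(t)$ in discounted units. Imposing the terminal condition $\widetilde{V}(T)=e^{-rT}\xi$ pins down $M(0)=\E_\Q(e^{-rT}\xi)$ and yields $\widetilde{V}(t)=M(t)$, whence the pricing formula \eqref{pricing} via $V(t)=e^{rt}M(t)=e^{-r(T-t)}\E_\Q(\xi\mid\mathcal{F}^S_t)$.

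The main obstacle is the filtration identity $\mathcal{F}^S_t=\mathcal{F}^{\widehat{W}}_t$: the delayed coefficient $g(S(\cdot-b))$ and the Volterra-type drift make it non-trivial to argue rigorously that $S$ and $\widehat{W}$ generate the same filtration, and one must rely on the explicit piecewise representation derived in the corollary of Section 2.1, together with the non-vanishing of $g$ provided by $({\bf A4})$, to justify the inversion. Once this is in place, the rest of the argument is a fairly standard application of martingale representation and self-financing bookkeeping.
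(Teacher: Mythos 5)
Your proposal is correct and follows essentially the same route as the paper: identify $\mathcal{F}^S$ with $\mathcal{F}^{\widehat{W}}$, apply the Brownian martingale representation theorem under $\Q$ to the discounted payoff martingale $M$, read off $\pi_S$ and $\pi_B$ from the integrand, and verify self-financing and the terminal condition to obtain the pricing formula. The only cosmetic difference is that you phrase the self-financing check in terms of the discounted wealth $\widetilde{V}$ while the paper verifies $dV(t)=\pi_B(t)\,d(e^{rt})+\pi_S(t)\,dS(t)$ directly via the product rule; you also give a more explicit justification of the filtration equality than the paper, which simply asserts it.
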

\begin{proof}
In virtue of definition of $W,\; \widehat{W},\;\widetilde{S}$ it is not hard to observe that $\mathcal{F}^{S}=\mathcal{F}^{\tilde{S}}=\mathcal{F}^{\widehat{W}}=\mathcal{F}^{W}$. Let consider an integrable non-negative $\mathcal{F}^{S}_T$-measurable random variable $\xi$ be a contingent claim and define 
\begin{eqnarray*}
	M(t)=\E_{\Q}(e^{-rT}\xi|\mathcal{F}^{S}_t)=\E_{\Q}(e^{-rT}\xi|\mathcal{F}^{\widehat{W}}_t),\;\;\; t\in[0,T].
\end{eqnarray*}
The martingale representation theorem implies that there exists an $\mathcal{F}^{\widehat{W}}$-predictable process $\{h(t),\; \in [0,T]\}$ such that
\begin{eqnarray*}
	\int^{T}_{0}h^2(t)dt<+\infty
\end{eqnarray*}
and 
\begin{eqnarray*}
	M(t)=\E_{\Q}(e^{-rT}\xi)+\int_{0}^{t}h(s)d\widehat{W}(s), \; \;\;  t\in [0,T].
\end{eqnarray*}
Let consider the strategy $\{\pi_{B}(t),\pi_{S}(t) : t \in [0,T]\}$ which consists in holding $ \pi_{S}(t)$ units of the stock and $\pi_{B}(t)$ units of the bond at time $t$ and define by 
\begin{eqnarray}\label{Strat}
	\pi_{S}(t)=\frac{h(t)}{\widetilde{S}(t)g(S(t-b)}\;\;\;\mbox{and}\;\;\; \pi_{B}(t)=M(t)-\pi_{S}(t)\widetilde{S}(t).
\end{eqnarray}
According its definition and together with \eqref{Strat}, the value of the portfolio at any time $t\in [0,T]$ is given by 
 \begin{eqnarray}\label{Port}
 	V (t)&=& \pi_{B}(t)e^{rt} +\pi_{S}(t)S(t)\nonumber\\
 	&=& e^{rt}M(t).
 \end{eqnarray}
  Therefore, it follows from the product rule and \eqref{Port} that 
  \begin{eqnarray*}
  dV(t)&=& e^{rt}dM(t) + M(t)d(e^{rt})\\
   &=&\pi_{B}(t)d(e^{rt})+\pi_{S}dS(t), \ t \in [0,T].	
  \end{eqnarray*}
  Thus, $\{(\pi_{B}(t),\pi_{S}(t)) : t \in [0,T]\}$ is a self-financing strategy. On the other hand, 
  \begin{eqnarray*}
  	V(T)=e^{rT}M(T)=\xi,\;\;\;\; \mbox{a.s},
  \end{eqnarray*}
which means that the contingent claim $\xi$ is feasible. Finally, recalling \eqref{Port}, we get \eqref{pricing}.	
\end{proof}
  
The following result is a consequence of Theorem \ref{theorem 20}. It gives Black-Scholes type formula for the value of a European option on the stock at any time prior to maturity. For that, let $\phi$ be the distribution function of the standard normal law i.e 
\begin{eqnarray*}
	\Phi(x)=\frac{1}{\sqrt{2\pi}}\int_{-\infty}^{x}e^{-u^2/2}dx,\;\;\; x\in\R
\end{eqnarray*}
and set $\ell=min(a,b)$.

\begin{corollary}\label{Cor}
Assume $({\bf A2})$-$({\bf A5})$. Let $V(t)$ be the fair price
of a European call option written on the stock $S$ satisfied \eqref{T21} with exercise price $K$ and maturity time $T$. Then we have for $t\in [T-\ell,T]$ 
\begin{eqnarray}\label{16}
  V(t)=S(t)\Phi(\beta_{+}(t))-Ke^{-r(T-t)}\Phi(\beta_{-}(t)),
  \end{eqnarray}
   where
\begin{eqnarray*}  
  \beta_{\pm}(t)=\frac{\ln\left(\frac{S(t)}{K}\right)+\int_{t}^{T}(r\pm\frac{1}{2}g^{2}(S(s-b)))ds}{\sqrt{\int_{t}^{T}g^{2}(S(s-b))ds}}.
  \end{eqnarray*}
If $t<T-\ell$, then
\begin{eqnarray}\label{17}
V(t)=\E_{\Q}\left[\widetilde{S}(T-\ell)\Phi(a_1)-e^{-r(T-t)}K\Phi(a_2)|\mathcal{F}^{\widehat{W}}_t\right],
\end{eqnarray} 
where 
\begin{eqnarray*}
	a_1&=&\frac{1}{\sqrt{\int^T_{T-\ell}g^2(S(s-b))ds}}\left[\ln\left(\frac{S(t)}{K}\right)+r(T-t)+\frac{1}{2}\int_{T-\ell}^Tg^2(S(s-b))ds\right],\\
	a_2&=&\frac{1}{\sqrt{\int^T_{T-\ell}g^2(S(s-b))ds}}\left[\ln\left(\frac{S(t)}{K}\right)+r(T-t)-\frac{1}{2}\int_{T-\ell}^Tg^2(S(s-b))ds\right].
\end{eqnarray*}

The hedging strategy is given by
\begin{eqnarray*}
	\pi_S(t)=\Phi(\beta_{+}),\;\;\; \pi_{B}=-Ke^{-r(T-t)}\Phi(\beta_{-}(t)),\;\;\; t\in[T-\ell,T].
\end{eqnarray*}
\end{corollary}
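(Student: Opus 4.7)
The starting point is the risk-neutral pricing formula \eqref{pricing} of Theorem~\ref{theorem 20}, which gives
\[
V(t)=e^{-r(T-t)}\,\E_{\Q}\!\left[(S(T)-K)^{+}\mid\mathcal{F}^{S}_{t}\right].
\]
Solving the linear SDE \eqref{M1} under $\Q$ produces the Doléans-Dade exponential
\[
S(T)=S(t)\exp\!\left(\int_{t}^{T}g(S(s-b))\,d\widehat{W}(s)+\int_{t}^{T}\!\Bigl(r-\tfrac{1}{2}g^{2}(S(s-b))\Bigr)ds\right),
\]
reducing the task to a conditional expectation of an exponential functional of $\widehat{W}$ with a delayed integrand.

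For $t\in[T-\ell,T]$, every $s\in[t,T]$ satisfies $s-b\le T-b\le T-\ell\le t$, so by continuity of $g$ (assumption $({\bf A2})$) the path $\{g(S(s-b)):s\in[t,T]\}$ is entirely $\mathcal{F}^{S}_{t}$-measurable. Since $\widehat{W}$ is a $\Q$-Brownian motion by Proposition~\ref{P1} and $\mathcal{F}^{\widehat{W}}_{t}=\mathcal{F}^{S}_{t}$, the increments $\widehat{W}(\cdot)-\widehat{W}(t)$ on $[t,T]$ are $\Q$-independent of $\mathcal{F}^{S}_{t}$. Conditionally on $\mathcal{F}^{S}_{t}$ the stochastic integral $\int_{t}^{T}g(S(s-b))d\widehat{W}(s)$ is therefore Gaussian with mean $0$ and conditionally deterministic variance $\Sigma^{2}(t)=\int_{t}^{T}g^{2}(S(s-b))ds$, so that $S(T)$ is conditionally log-normal. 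The classical identity $\E[(xe^{\sigma Z-\sigma^{2}/2+\rho}-K)^{+}]=xe^{\rho}\Phi(d_{+})-K\Phi(d_{-})$ for $Z\sim\mathcal{N}(0,1)$ applied with $x=S(t)$, $\rho=r(T-t)$ and $\sigma=\Sigma(t)$ then yields \eqref{16} after multiplication by $e^{-r(T-t)}$.

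For $t<T-\ell$, the tower property
\[
V(t)=e^{-r(T-t)}\,\E_{\Q}\!\Bigl[\,\E_{\Q}\!\bigl[(S(T)-K)^{+}\mid\mathcal{F}^{S}_{T-\ell}\bigr]\,\Big|\,\mathcal{F}^{S}_{t}\Bigr]
\]
reduces everything to the preceding case applied at time $T-\ell$; the resulting $\mathcal{F}^{S}_{T-\ell}$-measurable Black-Scholes value, after the substitution $S(T-\ell)=e^{r(T-\ell)}\widetilde{S}(T-\ell)$ and absorption of the discount factor, produces \eqref{17}. For the hedging strategy one invokes Theorem~\ref{theorem 20}, giving $\pi_{S}(t)=h(t)/(\widetilde{S}(t)g(S(t-b)))$ with $h$ the integrand in the martingale representation of $M(t)=e^{-rt}V(t)$. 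Applying Itô's formula to the right-hand side of \eqref{16} on $[T-\ell,T]$, treating $\Sigma^{2}(t)$ as an adapted process of finite variation and exploiting the classical cancellation $S(t)\Phi'(\beta_{+}(t))=Ke^{-r(T-t)}\Phi'(\beta_{-}(t))$, one reads off $h(t)=\Phi(\beta_{+}(t))\widetilde{S}(t)g(S(t-b))$, whence $\pi_{S}(t)=\Phi(\beta_{+}(t))$ and $\pi_{B}(t)=M(t)-\pi_{S}(t)\widetilde{S}(t)=-Ke^{-r(T-t)}\Phi(\beta_{-}(t))$.

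The main obstacle is the measurability bookkeeping that converts the path-dependent volatility $g(S(\cdot-b))$ into a conditionally deterministic function on $[t,T]$: this is precisely why the threshold $t\ge T-\ell$ appears, and why, for $t<T-\ell$, no closed-form reduction beyond \eqref{17} is available since the volatility on $[T-\ell,T]$ involves $S(s-b)$ with $s-b\in[T-\ell-b,T-b]$, portions of which generally lie outside $\mathcal{F}^{S}_{t}$. The Itô differentiation for the hedge must also carefully track the time-derivative of $\Sigma^{2}(t)$ and the $S(t)$-dependence of $\beta_{\pm}(t)$, but these contributions cancel by virtue of the identity above, which is the same mechanism that makes the delta in the standard Black-Scholes model equal to $\Phi(\beta_{+}(t))$.
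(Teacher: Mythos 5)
Your proposal is correct and follows essentially the same route as the paper: risk-neutral valuation via Theorem~\ref{theorem 20}, the exponential representation of $S(T)$ under $\Q$, the observation that for $t\ge T-\ell$ the volatility path $g(S(s-b))$, $s\in[t,T]$, is $\mathcal{F}^{S}_{t}$-measurable so that $S(T)$ is conditionally log-normal, and the tower property at $T-\ell$ for the case $t<T-\ell$. Your treatment of the hedge via It\^o's formula and the cancellation $S(t)\Phi'(\beta_{+})=Ke^{-r(T-t)}\Phi'(\beta_{-})$ is merely a more explicit version of the step the paper delegates to Baxter and Rennie.
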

\begin{proof}
Let consider European call option on the above market with a strike price $K$, maturity $T$. Since the playoff of such option is defined by $(S(T)-K)^{+}$. It follows from Theorem \ref{theorem 20} that the price of such option is given by: for $t \in [0,T]$,
\begin{eqnarray}\label{P2}
 V(t)& =& e^{-r(T-t)}\E_{\Q}((S(T)- K)^{+}|\mathcal{F}^{\widehat{W}}_{t})\nonumber\\
 &=& e^{-r(T-t)}\E_{\Q}(S(T)|\mathcal{F}^{\widehat{W}}_{t}){\bf 1}_{\{S(T)\geq K\}}- K e^{-r(T-t)}{\bf 1}_{\{S(T)\geq K\}}.
 \end{eqnarray}
 Recalling \eqref{M1}, we have 
\begin{eqnarray*}
S(T)= S(t)\exp\left(\int_{t}^{T}g((S(s-b))d\widehat{W}(s)+\int_{t}^{T}\left[r-\frac{1}{2}g^{2}(S(s-b))\right]ds\right),\ \forall t\in [0,T]. 
\end{eqnarray*}
 and then  for $t \in [0,T]$,
 \begin{eqnarray*}
 V(t)& =& e^{-r(T-t)}S(t)\E_{\Q}\left[\exp\left(\int_{t}^{T}g(S(s-b))d\widehat{W}(s)+\int_{t}^{T}\left[r-\frac{1}{2}g^{2}(S(s-b))\right]ds\right)|\mathcal{F}^{\widehat{W}}_{t}{\bf 1}_{\{S(T)\geq K\}}\right]\\
 &&- e^{-r(T-t)}K\P(S(T)\geq K).
 \end{eqnarray*} 
Since for $t\in [T-b,T],\, S(s-b)$ is $\mathcal{F}^{\widehat{W}}_{t}$-measurable for $s\in [0,t]$, then $\displaystyle -\frac{1}{2}\int_t^Tg^{2}S(s-b)ds$ is also $\mathcal{F}^{\widehat{W}}_{t}$-measurable. Furthermore, the process $\displaystyle \E\left(\int_{t}^{T}g(S(s-b))d\widehat{W}(s)|\mathcal{F}^{\widehat{W}}_{t}\right)$ is a centered Gaussian random variable with variance $\displaystyle\int_{t}^{T}g^{2}(S(s-b))ds$. On the other hand, $\{S(T)\geq K\}=\{X\leq \beta_{-}\}$ where $X$ is a Gaussian $\mathcal{N}(0,1)$-distributed random variable. Therefore, for all $t\in [T-\ell,T]$, 
\begin{eqnarray*}
 V(t)&=&\exp\left(-\frac{1}{2}\int_t^Tg^{2}(S(s-b))ds\right)S(t)\E_{\Q}\left[\exp\left(\sigma X\right){\bf 1}_{\{X\leq \beta_{-}\}}\right]-e^{-r(T-t)}K\P(X\leq \beta_{-}),
 \end{eqnarray*} 
 where
 \begin{eqnarray*}
 \sigma^2=\int_{t}^{T}g^{2}(S(s-b))ds.
\end{eqnarray*}
Finally, an elementary computation related to a standard normal distribution yields, for $t\in[T-\ell,T]$,
\begin{eqnarray*}
V(t)&=& S(t)\Phi(\beta_{+}(t))-e^{-r(T-t)}K\Phi(\beta_{-}(t)).
 \end{eqnarray*}
 Recalling again \eqref{M1}, we have
  \begin{eqnarray*}
 \widetilde{S}(T)=\widetilde{S}(T-\ell)\exp\left(\int^{T}_{T-\ell}g(S(u-b))d\widehat{W}(u)-\frac{1}{2}\int^{T}_{T-\ell}g^2(S(u-b))du\right).
  \end{eqnarray*}
 Therefore, in view of \eqref{P1} we obtain for $t<T-\ell$,
\begin{eqnarray*}
V(t)&=& e^{rt}\E_{\Q}\left[\widetilde{S}(T-\ell)\exp\left(\int_{T-\ell}^{T}g(S(s-b))d\widehat{W}(s)-\frac{1}{2}\int_{T-\ell}^{T}g^{2}(S(s-b))ds\right)|\mathcal{F}^{\widehat{W}}_{t}{\bf 1}_{\{S(T)\geq K\}}\right]\\
 &&- e^{-r(T-t)}K\P(S(T)\geq K)\\
 &=& e^{rt}\E_{\Q}\left[\widetilde{S}(T-\ell)\E_{\Q}\left[\exp\left(\int_{T-\ell}^{T}g(S(s-b))d\widehat{W}(s)-\frac{1}{2}\int_{T-\ell}^{T}g^{2}(S(s-b))ds\right)|\mathcal{F}^{\widehat{W}}_{T-\ell}{\bf 1}_{\{S(T)\geq K\}}\right]|\mathcal{F}^{\widehat{W}}_{t}\right]\\
 &&- e^{-r(T-t)}K\P(S(T)\geq K)
 \end{eqnarray*}
Set $B=\E_{\Q}\left[\exp\left(\int_{T-\ell}^{T}g(S(s-b))d\widehat{W}(s)-\frac{1}{2}\int_{T-\ell}^{T}g^{2}(S(s-b))ds\right)|\mathcal{F}^{\widehat{W}}_{T-\ell}{\bf 1}_{\{S(T)\geq K\}}\right]$, we have the following the same argument as above, 
 \begin{eqnarray*}
 	B=\Phi(a_1)
 \end{eqnarray*} 
 Therefore we have
 \begin{eqnarray*}
 	V(t)=\E_{\Q}\left(\widetilde{S}(T-\ell)\Phi(a_1)-e^{-(T-t)r}\Phi(a_2)|\mathcal{F}^{\widehat{W}}_{t}\right)
 \end{eqnarray*} 
 To calculate the hedging strategy for $t\in [T-\ell,T]$, it suffices to
use an idea from \cite{BR} (see pp. 95-96).
\end{proof}
\begin{remark}
If $g(x)=1$ for all $x\in \R_{+}$ then Equation \eqref{16} reduces
to the delayed Black and Scholes formula. Note that, in contrast with
non-delayed Black and Scholes formula, the fair price $V(t)$
in a general delayed model considered in Theorem \ref{Cor} depends not only
on the stock price $S(t)$ at the present time $t$, but also on the whole
segment $\{S(v),\; v\in [t-b,T-b]$.  (Of course $[t-b,T-b]\subset [ 0,t]$ since
$t\geq T-l$ and $l\leq b$.)
 \end{remark}   
\begin{remark}
Recalling that $a<T$, we have $T-\ell >0$. Therefore there exists $t\in [0,T]$ such that $t<T-\ell$. Therefore one can develop a recursive
procedure to calculate \eqref{17} by taking backwards steps of length $\ell$
from the maturity time $T$ of the option. Since the approach is similar to that stated by Arriojas et al. in \cite{Aal}, we will not explain it here. 
\end{remark}
\begin{remark}
According to Remark 4 in \cite{Aal}, one can rewrite the option
price $V(t),\; t\in [T-\ell,T]$ in terms of the solution of a random Black-
Scholes PDE of the form
\begin{eqnarray}\label{PDE}
\left\{
\begin{array}{l}
\frac{\partial F(t,x)}{\partial t}=-\frac{1}{2}g^2(S(t-b))x^2\frac{\partial^2 F(t,x)}{\partial x^2}-rx\frac{\partial F(t,x)}{\partial x}+rF(t,x),\;\; 0<t<T\\\\
F(T,x)=(x-K)^{+}, \;\; x>0.
\end{array}
\right.	
\end{eqnarray}
The above time-dependent random final-value problem admit a unique 
$(\mathcal{F}_{t\geq 0})_{t\geq 0}$-adapted random field $F(t,x)$. Using the classical Itô-Ventzell
formula (see, \cite{Kunita}) and \eqref{pricing} of Theorem \ref{theorem 20}, it can be shown that
\begin{eqnarray}\label{S}
	V(t)=e^{-r(T-t)}F(t,S(t)), \;\; t\in [T-\ell, T].
\end{eqnarray}
Moreove, the representation \eqref{S} do not much if $t\leq T-\ell$, because in this context, the solution $F$ of the Black-Scholes PDE \eqref{PDE} is
anticipating with respect to the filtration $(\mathcal{F}_{t\geq 0})_{t\geq 0}$. 
\end{remark}

\subsection{Financial risk and risk-free asset market II}
Let consider a market consisting of a single stock
whose price $S$ satisfies the SDDE \eqref{T2} and a riskless asset (a bond or bank account) $B$ satisfy the following dynamics
\begin{eqnarray}\label{Free}
B(t)&=& 1+\int_{0}^{t}\int_{-T}^{-a}r(s+u)B(s+u)\alpha(du)B(s)ds\nonumber\\
B(t)&=& 1,\;\; t\in [-T,0],
\end{eqnarray}
where 
\begin{eqnarray*}
 	r(t)=\left\{
 	\begin{array}{lll}
 	r&\mbox{if}& t\geq -a\\
 	0 &\mbox{if}& t< -a.	
 	\end{array}\right.
 \end{eqnarray*}

\begin{remark}
Let set $a=0$ and $\alpha=\delta_{-t}$ then 
\begin{eqnarray*}
\int_{-T}^{-a}r(t+u)B(t+u)\alpha(du)&=&\int_{-T}^{0}rB(t+u)\delta_{-t}(du)\\
	&=& r\, B(0)
\end{eqnarray*}
and equation \eqref{Free} coincide with classical ODE whose solution is $B(t)=e^{-rt}$.
\end{remark}
\begin{proposition}
Assume $({\bf A5})$. Then delayed ODE \eqref{Free} admit an unique positive solution. Moreover, we have 
\begin{eqnarray}\label{R2}
B(t)=\exp\left(\frac{r}{T-a}\int_{0}^{t}\Theta(s) ds\right)
\end{eqnarray}
where
\begin{eqnarray*}
\Theta(s)=\sum^{p}_{k=1}\left(\int_{(k-2)a}^{s-a}B(u)du\right){\bf 1}_{[(k-1)a,ka]}(s)+\left(\int_{(p-1)a}^{s-a}B(u)du\right){\bf 1}_{[pa,T]}(s),
\end{eqnarray*}
with $p\in \N^{*}$ such that $pa\leq T<(p+1)a$
\end{proposition}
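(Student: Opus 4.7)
The plan is to mirror the strategy used to prove Theorem \ref{Theorem 2}, in the purely deterministic setting of \eqref{Free}: since no stochastic integral is present, the problem reduces to a linear delayed ODE that can be solved by a Picard-type iteration on successive windows of length $a$.

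First I would simplify the double integral
$$I(t)=\int_0^t\int_{-T}^{-a}r(s+u)B(s+u)\,\alpha(du)\,B(s)\,ds$$
by Fubini together with the change of variable $\tau=s+u$; under (A5), $\alpha$ is uniform on $[-T,-a]$ and hence translation invariant on its support, which legitimates the swap. One obtains $I(t)=\int_0^t\bigl(\int_{s-T}^{s-a}r(\tau)B(\tau)\,\alpha(d\tau)\bigr)B(s)\,ds$, and splitting the inner integral at $-a$ while invoking $r(\tau)=0$ on $[-T,-a]$ kills the portion over $[s-T,-a]$. This leaves $I(t)=\frac{r}{T-a}\int_0^t\bigl(\int_{-a}^{s-a}B(\tau)\,d\tau\bigr)B(s)\,ds$, so \eqref{Free} reduces to the scalar integral equation $B(t)=1+\frac{r}{T-a}\int_0^t\widetilde{\Theta}(s)B(s)\,ds$ with $\widetilde{\Theta}(s):=\int_{-a}^{s-a}B(\tau)\,d\tau$, equivalent to the delayed ODE $B'(t)=\frac{r}{T-a}\widetilde{\Theta}(t)B(t)$ with $B(0)=1$.

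Because $\widetilde{\Theta}(t)$ depends only on $B$ restricted to $[-a,t-a]$, I would construct $B$ inductively on the windows $[0,a],[a,2a],\ldots,[(p-1)a,pa],[pa,T]$. On $[0,a]$ the past is $B\equiv 1$ by the initial condition, so $\widetilde{\Theta}(t)=t$ and direct integration gives $B(t)=\exp\bigl(rt^2/(2(T-a))\bigr)>0$. Assuming $B$ has been built positively up to $(k-1)a$, on $[(k-1)a,ka]$ the coefficient $\widetilde{\Theta}$ is fully prescribed by already-known values of $B$, so the resulting scalar linear ODE admits a unique strictly positive solution obtained by separation of variables. The final window $[pa,T]$ is treated identically, and concatenating the pieces together with $B(0)=1$ yields the exponential representation \eqref{R2}; positivity is then immediate.

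The main obstacle I anticipate is not the existence, uniqueness, or positivity arguments, which follow smoothly from the piecewise Picard iteration, but rather the bookkeeping needed to match the accumulated quantity $\int_0^t\widetilde{\Theta}(s)\,ds$ with the exact piecewise $\Theta$ stated in the proposition. One must carefully track, on each window $[(k-1)a,ka]$, how $s-a$ ranges in $[(k-2)a,(k-1)a]$, and verify that the truncated integrals $\int_{(k-2)a}^{s-a}B(u)\,du$ appearing in the summands of $\Theta$ reassemble correctly into the full exponent once the contributions from earlier windows are accounted for. Fubini, the change of variable, and the window-by-window ODE resolution are otherwise routine.
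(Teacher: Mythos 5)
The paper states this proposition without any proof, so the only benchmark is the analogous window-by-window argument used for Theorem \ref{Theorem 2}; your overall strategy --- reduce \eqref{Free} to a scalar linear delayed ODE and integrate it successively on intervals of length $a$, on each of which the coefficient is already determined by the preceding step --- is exactly that argument transposed to the deterministic setting, and it is the right skeleton for existence, uniqueness and positivity.

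Two points, however, are not mere bookkeeping. First, your simplification of the inner integral is only valid on part of $[0,T]$: after the change of variable $\tau=s+u$ the range is $[s-T,s-a]$, and the factor $r(\tau)$ truncates it to $[\max(s-T,-a),\,s-a]$. For $s\le T-a$ this is your $[-a,s-a]$, but for $s\in(T-a,T]$ one has $s-T>-a$ and the correct coefficient is $\frac{r}{T-a}\int_{s-T}^{s-a}B(\tau)\,d\tau$; writing $\frac{r}{T-a}\int_{-a}^{s-a}B(\tau)\,d\tau$ there adds the spurious positive term $\frac{r}{T-a}\int_{-a}^{s-T}B(\tau)\,d\tau$ (equal to $\frac{r(s-T+a)}{T-a}$ where $B\equiv 1$). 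Second, the kernel $\widetilde{\Theta}(s)=\int_{-a}^{s-a}B(\tau)\,d\tau$ you obtain does not reassemble into the $\Theta$ of the statement: on $[(k-1)a,ka]$ with $k\ge 2$ the stated $\Theta(s)=\int_{(k-2)a}^{s-a}B(u)\,du$ omits the accumulated contribution $\int_{-a}^{(k-2)a}B(u)\,du>0$ of all earlier windows (already on $[a,2a]$ the two differ by the constant $a$), so $\int_{0}^{t}\widetilde{\Theta}(s)\,ds\neq\int_{0}^{t}\Theta(s)\,ds$ for $t>a$ and the exponent you would derive is not the one in \eqref{R2}. You must either repair the reduction and prove the formula with the correct kernel, or explicitly record the discrepancy with the stated $\Theta$; deferring this as ``bookkeeping'' leaves the main identity of the proposition unestablished.
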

We are now able to derive the explicit expression of the discounted stock price process. For this let set 
\begin{eqnarray*}
\rho(T)=\exp\left(-\int_{0}^{T}	\frac{\mu\Psi(s)-r\Theta(s)}{(T-a)g(S(s-b))}dW(s)-\frac{1}{2}\int_{0}^{T}\left(\frac{\mu\Psi(s)-r\Theta(s)}{(T-a)g(S(s-b))}\right)^2ds\right).
\end{eqnarray*} 
\begin{proposition}
Let $(\widetilde{S}(t))_{0\leq t\leq T}$ denote discounted stock price process defined by 
\begin{eqnarray*}
\widetilde{S}(t)=\frac{S(t)}{B(t)}.
\end{eqnarray*} 
We derive that
 \begin{eqnarray}\label{Mart1}
 \widetilde{S}(t)&=&S(0)+\int^t_0 \widetilde{S}(s)g(S(s-b))d\widehat{W}(s),
 \end{eqnarray}
 where 
  \begin{eqnarray}\label{Gir1}
 	\widehat{W}(t)=W(s)+\int^t_0\left(\frac{\mu\Psi(s)-r\Theta(s)}{(T-a)g(S(s-b))}\right)ds.
 \end{eqnarray}
Moreover, $(\widehat{W}(t))_{t\geq 0}$ and $(\widetilde{S}(t))_{0\leq t\leq T}$ are respectively a standard Wiener process and martingale under the measure $\Q$ defined by $d\Q=\rho(T)d\P$. 
 \end{proposition}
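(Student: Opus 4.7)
The plan is to derive \eqref{Mart1} by applying the It\^o product rule to $\widetilde{S}(t)=S(t)B(t)^{-1}$. Because $B$ has finite variation---under~(A5) it solves the deterministic delay equation $dB(t)=\frac{r\Theta(t)}{T-a}B(t)\,dt$---the quadratic covariation term vanishes, so combining with the differential form of \eqref{T2},
$$dS(t)=\tfrac{\mu\Psi(t)}{T-a}\,S(t)\,dt+g(S(t-b))\,S(t)\,dW(t),$$
yields
$$d\widetilde{S}(t)=\widetilde{S}(t)\left[\tfrac{\mu\Psi(t)-r\Theta(t)}{T-a}\,dt+g(S(t-b))\,dW(t)\right].$$
Factoring out $g(S(t-b))$, which never vanishes by~(A4), immediately rewrites this in the form \eqref{Mart1} with $\widehat{W}$ as in \eqref{Gir1}.

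Next, setting $\theta(s)=-\frac{\mu\Psi(s)-r\Theta(s)}{(T-a)\,g(S(s-b))}$, I would verify that the Dol\'eans exponential $\rho(T)$ is a genuine $\P$-martingale with $\E_{\P}[\rho(T)]=1$, so that $d\Q=\rho(T)\,d\P$ defines a probability measure and Girsanov's theorem applies. The argument mirrors the one used in Proposition~\ref{P1}: Theorem~\ref{Theorem 2} produces a continuous, strictly positive process $S$ on $[-T,T]$, and the preceding proposition yields the same for $B$, so $\Psi$, $\Theta$ and $S(\cdot-b)$ all have a.s.\ bounded and continuous paths on $[0,T]$. Combined with~(A4) and continuity of $g$, this forces $\theta$ to be a.s.\ bounded on $[0,T]$, and the conclusion $\E_{\P}[\rho(T)]=1$ then follows exactly as in~\cite{Aal}.

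With Girsanov in hand, $\widehat{W}$ is a standard Brownian motion under $\Q$, and \eqref{Mart1} exhibits $\widetilde{S}$ as a stochastic exponential driven by $\widehat{W}$, hence a continuous positive $\Q$-local martingale; the same a.s.\ boundedness of $\widetilde{S}(s)g(S(s-b))$ on $[0,T]$ upgrades it to a genuine square-integrable $\Q$-martingale, completing the proposition.

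The delicate point is the exponential-integrability step for $\theta$: since the denominator $g(S(s-b))$ depends on the random path of $S$ and could in principle approach zero, one must combine~(A4) with pathwise continuity and strict positivity of $S$ to secure uniform non-degeneracy of $g(S(\cdot-b))$ on $[0,T]$. This is the one place where the extra delay in the bond $B$ does any work, through its contribution to $\Theta$; otherwise the proof is essentially a transposition of the single-risky-asset argument of Proposition~\ref{P1} to the two-delayed-asset market considered here.
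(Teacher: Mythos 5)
Your argument is correct and follows essentially the same route as the paper: an It\^o product-rule computation on $\widetilde{S}=S/B$ using that $B$ solves the finite-variation delay ODE $dB(t)=\tfrac{r\Theta(t)}{T-a}B(t)\,dt$, followed by factoring out $g(S(t-b))$ via $({\bf A4})$ and invoking Girsanov with the same density $\rho(T)$, with the verification that $\E_{\P}[\rho(T)]=1$ deferred to the boundedness argument of Proposition~\ref{P1} and to \cite{Aal} exactly as the paper does. The only point where you are slightly more ambitious than the paper is the final upgrade from $\Q$-local martingale to a square-integrable $\Q$-martingale, which pathwise (random) boundedness of $\widetilde{S}(s)g(S(s-b))$ alone does not quite deliver; the paper itself only asserts the local martingale property in the subsequent remark and otherwise points back to \cite{Aal} for this step.
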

\begin{proof}
 In view of \eqref{R2}, we have 
 \begin{eqnarray*}
 	\widetilde{S}(t)=\exp\left(-\frac{r}{T-a}\int^t_0\Theta(s)ds\right)S(t).
 \end{eqnarray*}
 Therefore, it follows from Itô formula that 
 \begin{eqnarray*}
 d\widetilde{S}(t)&=& \exp\left(-\frac{r}{T-a}\int^t_0\Theta(s)ds\right)dS(t)-\frac{r\Theta(t)}{T-a}\exp\left(-\frac{r}{T-a}\int^t_0\Theta(s)ds\right)S(t)dt\\
 &=&\exp\left(-\frac{r}{T-a}\int^t_0\Theta(s)ds\right)\left[\frac{\mu}{T-a}\Psi(t)S(t)dt+g(S(t-b))S(t)dW(t))\right]\\
 &&-\frac{r\Theta(t)}{T-a}\exp\left(-\frac{r}{T-a}\int^t_0\Theta(s)ds\right)S(t)dt\\
 &=&g(S(t-b))\widetilde{S}(t)\left[\frac{\mu\Psi(t)-r\Theta(t)}{(T-a)g(S(t-b))}dt+ dW(t)\right].
 \end{eqnarray*}
The rest follows the same argument as in the proof of Proposition \ref{P1}.  
 \end{proof}
  \begin{remark}
 In view of \eqref{Mart1}, the discounted stock price $\widetilde{S}$ is a continuous $\Q$-local martingale. Therefore with the same reason as in Remark \ref{OP},  $\Q$ is an equivalent local martingale measure so that market containing $(B, S)$ on [0, T ] satisfies the no-arbitrage property.
 \end{remark}
We will now study the completeness of the financial market consisting of risk-free asset $B$ solution of delayed ODE \eqref{R2} and risky asset $S$ solution of GDSDE \eqref{T2} over the time interval $[0,T]$. The proof is almost identical to that of Theorem \ref{theorem 20}.

\begin{theorem}\label{theorem 2}
Assume $({\bf A2})$-$({\bf A5})$. Then price of the option with a positive $\mathcal{F}^{S}_{T}$-measurable and integrable payoff $\xi$ associated to the above stock is given by
  \begin{eqnarray}\label{Pricing}
  	V (t)=\exp\left(-\frac{r}{T-a}\int^T_t\Theta(s)ds\right)\E_{\mathbb{Q}}(\xi|\mathcal{F}^{S}_{t}).
  \end{eqnarray}
 Moreover, there exist an adapted and square integrable process $\{h(s),\ s\in [0,T]\}$ such that 
   \begin{eqnarray*}
    \E_{\Q}\left(\exp\left(-\frac{r}{T-a}\int^T_0\Theta(s)ds\right)\xi|\mathcal{F}^{S}\right)=\E_{\Q}\left(\exp\left(-\frac{r}{T-a}\int^T_0\Theta(s)ds\right)\xi\right) +\int_{0}^{t} h(s)d\widehat{W}(s).	
   \end{eqnarray*}
  Next, the hedging strategy is given by
    \begin{eqnarray*} 
       \pi_{S}(t)=\frac{h(t)}{\tilde{S}(t)g(S(t-b))},\ \pi_{B}(t)=M(t)-\pi_{S}(t)\widetilde{S}(t), \ t\in [0,T]. 
   \end{eqnarray*}
   \end{theorem}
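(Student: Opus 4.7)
The strategy is to mimic almost verbatim the argument of Theorem \ref{theorem 20}, the only new ingredient being that the numeraire $B$ is now a (deterministic) time-dependent process solving the delayed ODE \eqref{Free}. The preceding proposition already constructs the equivalent measure $\Q$ and shows that $\widehat{W}$ in \eqref{Gir1} is a $\Q$-Brownian motion while $\widetilde{S}$ is a continuous $\Q$-local martingale driven by $\widehat{W}$; I will treat those as given.

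First I would observe that, because $\Q\sim\P$ and because the Girsanov drift in \eqref{Gir1} is adapted to $\mathcal{F}^S$, one has the chain of equalities $\mathcal{F}^S=\mathcal{F}^{\widetilde{S}}=\mathcal{F}^{\widehat{W}}=\mathcal{F}^W$, just as in Theorem \ref{theorem 20}. Fix the positive integrable $\mathcal{F}^S_T$-measurable claim $\xi$ and set
\begin{eqnarray*}
M(t)=\E_{\Q}\!\left(\exp\!\left(-\tfrac{r}{T-a}\textstyle\int_{0}^{T}\Theta(s)ds\right)\xi\,\Big|\,\mathcal{F}^{S}_{t}\right),\qquad t\in[0,T],
\end{eqnarray*}
which is a $\Q$-martingale with respect to $\mathcal{F}^{\widehat{W}}_{t}$. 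Since this filtration is Brownian, the martingale representation theorem furnishes an $\mathcal{F}^{\widehat{W}}$-predictable process $h$ with $\int_{0}^{T}h^{2}(s)ds<\infty$ a.s.\ and
\begin{eqnarray*}
M(t)=M(0)+\int_{0}^{t}h(s)\,d\widehat{W}(s).
\end{eqnarray*}

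Next I would introduce the candidate hedging portfolio
\begin{eqnarray*}
\pi_{S}(t)=\frac{h(t)}{\widetilde{S}(t)g(S(t-b))},\qquad \pi_{B}(t)=M(t)-\pi_{S}(t)\widetilde{S}(t),
\end{eqnarray*}
whose wealth is $V(t)=\pi_{B}(t)B(t)+\pi_{S}(t)S(t)=B(t)M(t)$. Applying the product rule together with \eqref{Mart1} and the delayed ODE satisfied by $B$, I would verify that $dV(t)=\pi_{B}(t)\,dB(t)+\pi_{S}(t)\,dS(t)$, which is exactly the self-financing condition in the delayed risk-free setting; this is the calculation that in Theorem \ref{theorem 20} played the role of the classical Black--Scholes self-financing identity, and here the only change is that $dB(t)=\frac{r}{T-a}\Theta(t)B(t)\,dt$ instead of $rB(t)\,dt$. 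Then $V(T)=B(T)M(T)=\xi$ a.s., so the strategy hedges $\xi$.

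Finally, because the delayed ODE \eqref{Free} is deterministic, the factor $\exp\!\bigl(-\tfrac{r}{T-a}\int_{t}^{T}\Theta(s)ds\bigr)=B(t)/B(T)$ is deterministic and may be pulled out of $\E_{\Q}(\,\cdot\,|\mathcal{F}^{S}_{t})$, giving
\begin{eqnarray*}
V(t)=B(t)M(t)=\exp\!\left(-\tfrac{r}{T-a}\textstyle\int_{t}^{T}\Theta(s)ds\right)\E_{\Q}\bigl(\xi\,\big|\,\mathcal{F}^{S}_{t}\bigr),
\end{eqnarray*}
which is \eqref{Pricing}. The main obstacle I anticipate is verifying the self-financing identity cleanly: one has to combine the delayed dynamics of $B$ (which contributes a nontrivial, path-dependent drift $\tfrac{r}{T-a}\Theta(t)$) with the Girsanov drift inside $d\widehat{W}$ so that the extra $\Theta$-terms cancel, leaving only $\pi_{B}\,dB+\pi_{S}\,dS$. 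Everything else reduces to repeating the Brownian-filtration/representation argument of Theorem \ref{theorem 20} verbatim.
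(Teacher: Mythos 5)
Your proof is correct and is exactly the argument the paper intends: the paper omits the proof of this theorem entirely, remarking only that it is ``almost identical to that of Theorem \ref{theorem 20}''. Your adaptation --- replacing the numeraire $e^{rt}$ by the deterministic delayed bond $B(t)$, rerunning the martingale representation and self-financing computation, and pulling the deterministic factor $B(t)/B(T)=\exp\bigl(-\tfrac{r}{T-a}\int_t^T\Theta(s)ds\bigr)$ out of the conditional expectation --- is precisely the omitted adaptation.
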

  
\begin{corollary}
Assume $({\bf A2})$-$({\bf A5})$. Let denote $V(t)$, the price at $t$ of a European call option issued on stock $S$ with strike price $K$ and maturity $T$. Then we have: 
If $t\in [T-\ell]$, 
  \begin{eqnarray*}
  V(t)=S(t)\Phi(\beta_{+}(t))-\exp\left(-\frac{r}{T-a}\int^T_t\Theta(s)ds\right)K\Phi(\beta_{-}(t)),
  \end{eqnarray*}
 where 
 \begin{eqnarray*}
  \beta_{\pm}(t)=\frac{\ln\left(\frac{S(t)}{K}\right)+\displaystyle\int_{t}^{T}\left(\frac{r}{T-a}\Theta(s)\pm\frac{1}{2}g^{2}(S(s-b))\right)ds}{\sqrt{\displaystyle\int_{t}^{T}g^{2}(S(s-b))ds}}.
  \end{eqnarray*} 
 If $t<T-\ell$, then
\begin{eqnarray}\label{17}
V(t)=\E_{\Q}\left[\widetilde{S}(T-\ell)\Phi(a_1)-\exp\left(-\frac{r}{T-a}\int_t^T\Theta(s)ds\right) K\Phi(a_2)|\mathcal{F}^{\widehat{W}}_t\right],
\end{eqnarray} 
where 
\begin{eqnarray*}
	a_1&=&\frac{1}{\sqrt{\int^T_{T-\ell}g^2(S(s-b))ds}}\left[\ln\left(\frac{S(t)}{K}\right)+\int_{T-\ell}^T\left(\frac{r}{T-a}\Theta(s) +\frac{1}{2}g^2(S(s-b))\right)ds\right],\\
	a_2&=&\frac{1}{\sqrt{\int^T_{T-\ell}g^2(S(s-b))ds}}\left[\ln\left(\frac{S(t)}{K}\right)+\int_{T-\ell}^T\left(\frac{r}{T-a}\Theta(s)-\frac{1}{2}g^2(S(s-b))\right)ds\right].
\end{eqnarray*}
with $\Phi$ denotes the distribution function of the standard

normal law i.e 
\begin{eqnarray*}
	\Phi(x)=\frac{1}{\sqrt{2\pi}}\int_{-\infty}^{x}e^{-u^2/2}dx,\;\;\; x\in\R.
\end{eqnarray*}
The hedging strategy is given by
\begin{eqnarray*}
	\pi_S(t)=\Phi(\beta_{+}),\;\;\; \pi_{B}=-\exp\left(-\frac{r}{T-a}\int^T_t\Theta(s)ds\right)K\Phi(\beta_{-}(t)),\;\;\; t\in[T-l,T].
\end{eqnarray*}
\end{corollary}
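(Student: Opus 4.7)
The plan is to replicate the proof of the analogous Corollary in Section 3.1, but with the deterministic discount factor $e^{-r(T-t)}$ replaced by the path-dependent one $\exp\bigl(-\tfrac{r}{T-a}\int_t^T\Theta(s)\,ds\bigr)$ coming from \eqref{R2}. A crucial simplification is that $B$ solves a deterministic delayed ODE, so the function $\Theta$ is itself deterministic; only $g(S(\cdot-b))$ will carry any randomness beyond $\widehat W$.

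First I would specialise Theorem \ref{theorem 2} to the European call payoff $\xi=(S(T)-K)^+$ to get
\begin{eqnarray*}
V(t)=\exp\Bigl(-\tfrac{r}{T-a}\int_t^T\Theta(s)\,ds\Bigr)\,\E_\Q\bigl((S(T)-K)^+\mid\mathcal F^{\widehat W}_t\bigr).
\end{eqnarray*}
Solving the linear $\Q$-SDE \eqref{Mart1} for $\widetilde S$ and multiplying by $B(T)/B(t)$ yields
\begin{eqnarray*}
S(T)=S(t)\exp\Bigl(\tfrac{r}{T-a}\int_t^T\Theta(s)\,ds+\int_t^T g(S(s-b))\,d\widehat W(s)-\tfrac12\int_t^T g^2(S(s-b))\,ds\Bigr).
\end{eqnarray*}
For $t\in[T-\ell,T]$, since $\ell\leq b$, every argument $s-b$ with $s\in[t,T]$ satisfies $s-b\leq T-b\leq t$, so the whole process $g(S(\cdot-b))$ on $[t,T]$ is $\mathcal F^{\widehat W}_t$-measurable. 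Consequently, conditionally on $\mathcal F^{\widehat W}_t$, the Itô integral $\int_t^T g(S(s-b))\,d\widehat W(s)$ is a centred Gaussian with conditional variance $\sigma^2=\int_t^T g^2(S(s-b))\,ds$, and the event $\{S(T)\geq K\}$ becomes a half-line $\{X\leq\beta_-(t)\}$ for $X\sim\mathcal N(0,1)$. Splitting the conditional expectation into an $S(T)$-part and a $K$-part and completing the square in the exponential gives the closed-form expression with $\Phi(\beta_\pm(t))$.

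For $t<T-\ell$ the processes $g(S(s-b))$ and (if one replaced $B$ by a non-deterministic bond) $\Theta(s)$ are no longer $\mathcal F^{\widehat W}_t$-measurable for $s$ near $T$, so I would apply the tower property: condition first on $\mathcal F^{\widehat W}_{T-\ell}$ to reduce the inner expectation on $[T-\ell,T]$ to the previous case (producing the bracket $\widetilde S(T-\ell)\Phi(a_1)-\exp(-\tfrac{r}{T-a}\int_t^T\Theta(s)\,ds)K\Phi(a_2)$), then take the outer $\mathcal F^{\widehat W}_t$-conditional expectation. The hedging strategy is obtained as in Theorem \ref{theorem 2}: identify the integrand $h$ of the martingale representation of $M$ by differentiating the explicit pricing formula with respect to $S(t)$, then read off $\pi_S=h/(\widetilde S\,g(S(\cdot-b)))$ and $\pi_B=M-\pi_S\widetilde S$, following \cite{BR} pp.~95--96.

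The main obstacle is the careful bookkeeping of measurabilities on $[T-\ell,T]$ versus $[t,T-\ell]$ so that, given the appropriate filtration, both $\Theta(s)$ and $g(S(s-b))$ effectively behave as constants inside the Gaussian integral; the definition $\ell=\min(a,b)$ is tuned precisely for this. Once this is in place, the argument reduces to the standard Black--Scholes Gaussian computation applied with the delayed discount factor in place of $e^{-r(T-t)}$.
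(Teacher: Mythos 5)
Your proposal is correct and is essentially the argument the paper intends: the paper omits a proof for this corollary, leaving it as the direct analogue of Corollary \ref{Cor} in Section 3.1, and your adaptation (replacing $e^{-r(T-t)}$ by the deterministic factor $\exp(-\tfrac{r}{T-a}\int_t^T\Theta(s)\,ds)$, using the $\mathcal{F}^{\widehat W}_t$-measurability of $g(S(\cdot-b))$ on $[T-\ell,T]$ for the conditional Gaussian computation, and the tower property at $T-\ell$ otherwise) is exactly that proof. Your observation that $\Theta$ is deterministic, so the discount factor passes freely through conditional expectations, is the only point needing checking beyond the Section 3.1 argument, and you handle it correctly.
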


\begin{remark}
Let suppose the stock $S$ satisfied \eqref{T1} and risk-free asset $B(t)=e^{-rt}$. Let $\widetilde{S}(t)$ denote a discounted stock price process defined by 
 \begin{eqnarray*}
\widetilde{S}(t)=\frac{S(t)}{B(t)}=e^{-rt}S(t), \;\; t\in[0,T].
\end{eqnarray*}
We have  
\begin{eqnarray*}
\widetilde{S}(t)=S(0)+\int^t_0g(S(s-b))\widetilde{S}(s)d\widehat{W}(s)+\frac{\mu}{T-a}\int_0^t \widetilde{\psi}(s)ds,	
\end{eqnarray*}
where
\begin{eqnarray*}
\widehat{W}(t)=	W(t)-\int_{0}^{t}\frac{r}{(T-a)g(S(s-b))}ds
\end{eqnarray*}
and
\begin{eqnarray*}
\widetilde{\psi}(t)=e^{-rt}\left(\int_{-a}^{t-a}\varphi(s)ds\right).
\end{eqnarray*}
Although $(\widehat{W}(t))_{t\geq t}$ is a standard Wiener process under the measure $\Q$ defined by $d\Q=\rho(T)d\P$, where
\begin{eqnarray*}
\rho(T)=\exp\left(-\int_{0}^{T}	\frac{\mu\,r}{(T-a)g(S(s-b))}dW(s)-\frac{1}{2}\int_{0}^{T}\left(\frac{\mu r}{(T-a)g(S(s-b))}\right)^2ds\right),
\end{eqnarray*}
the discounted process $(\widetilde{S}(t))_{t\geq 0}$ is not a $\Q$-martingale. Therefore, the market consisting of $(B(t),S(t))_{t\in[0,T]}$ do not satisfies the no-arbitrage property. In other words one have an admissible self-financing strategy which gives an arbitrage opportunity. 
\end{remark}

\end{document}